\documentclass[a4paper,12pt,reqno]{amsart}

\usepackage{amsfonts}
\usepackage{amsmath}
\usepackage{amssymb}

\usepackage{mathrsfs}
\usepackage{hyperref}

%%%%%%%%%%%%%%%%%%%%%%%%%%
\setlength{\textwidth}{15.2cm}
\setlength{\textheight}{22.7cm}
\setlength{\topmargin}{0mm}
\setlength{\oddsidemargin}{3mm}
\setlength{\evensidemargin}{3mm}
\setlength{\footskip}{1cm}

%%%%%%%%%%%%%%%%%%%%%%%%%%%

\numberwithin{equation}{section}

\usepackage{graphicx}

\hyphenation{ope-rators}
 \newtheorem{thm}{Theorem}[section]
 \newtheorem{cor}[thm]{Corollary}
 \newtheorem{lem}[thm]{Lemma}
 
 \theoremstyle{definition}
 \newtheorem{defn}[thm]{Definition}
 \theoremstyle{remark}
 \newtheorem{rem}[thm]{Remark}
 
 \numberwithin{equation}{section}

\newcommand{\ene}{\mathbb{N}}

\newcommand{\bba}{\mathcal{B}}
\DeclareMathOperator{\Tr}{Tr}

\newcommand{\er}{\mathbb{R}}

\newcommand{\zet}{\mathbb{Z}}

\newcommand{\ern}{{\mathbb{R}}^n}
\newcommand{\ard}{{\mathbb{R}}^{2d}}
\newcommand{\arda}{{\mathbb{R}}^{d}}

\newcommand{\M}{\mathcal{M}}
\newcommand{\Fe}{\mathcal{F}}
\newcommand{\bi}{\begin{itemize}}
\newcommand{\ei}{\end{itemize}}
\newcommand{\be}{\begin{enumerate}}
\newcommand{\ee}{\end{enumerate}}
\newcommand{\beq}{\begin{equation}}
\newcommand{\eq}{\end{equation}}
 
\newcommand{\W}{\mathcal{W}}
\newcommand{\Rev}{\mathcal{R}}

 \begin{document}

%-------------------------------------------------------------------------
% editorial commands: to be inserted by the editorial office
%
%\firstpage{1} \volume{228} \Copyrightyear{2004} \DOI{003-0001}
%
%
%\seriesextra{Just an add-on}
%\seriesextraline{This is the Concrete Title of this Book\br H.E. R and S.T.C. W, Eds.}
%
% for journals:
%
%\firstpage{1}
%\issuenumber{1}
%\Volumeandyear{1 (2004)}
%\Copyrightyear{2004}
%\DOI{003-xxxx-y}
%\Signet
%\commby{inhouse}
%\submitted{March 14, 2003}
%\received{March 16, 2000}
%\revised{June 1, 2000}
%\accepted{July 22, 2000}
%
%
%
%---------------------------------------------------------------------------
%Insert here the title, affiliations and abstract:
%
\title[Approximation property, nuclearity and traces]
 {Approximation property and nuclearity on mixed-norm $L^{p}$,
 modulation and Wiener amalgam spaces}

%----------Author 1
\author[Julio Delgado]{Julio Delgado}

%\author[]{Julio Delgado.}

\address{Department of Mathematics\\
Imperial College London\\
180 Queen's Gate, London SW7 2AZ\\
United Kingdom}

\email{j.delgado@imperial.ac.uk}

\thanks{The first author was supported by the
Leverhulme Research Grant RPG-2014-02.
The second author was partially supported by
 the EPSRC Grant EP/K039407/1.
  The third author was supported in part by NSFC, grant 1171023.
  No new data was collected or generated during the course of the research.
The authors have been also supported by the Sino-UK research project by the British
Council China and the China Scholarship Council.}
%----------Author 2
\author{Michael Ruzhansky}

\address{Department of Mathematics\\
Imperial College London\\
180 Queen's Gate, London SW7 2AZ\\
United Kingdom}

\email{m.ruzhansky@imperial.ac.uk}
%----------Author 3
\author{Baoxiang Wang}

\address{LMAM, School of Mathematical Sciences\\
Peking University\\
Beijing 100871\\
China}
\email{wbx@pku.edu.cn}

%----------classification, keywords, date
\subjclass[2010]{Primary 46B26,  47B38; Secondary 47G10, 47B06, 42B35}

\keywords{Mixed-norm Lebesgue spaces, modulation spaces, Wiener amalgam spaces,
approximation property,
nuclearity, trace formulae, harmonic oscillator}
%\subjclass{Primary 47B10; Secondary 47G10, 47B38, 60G46.}

\date{\today}
%----------additions
%\dedicatory{To my boss}
%%% ----------------------------------------------------------------------
\begin{abstract}
In this paper we first prove the metric approximation property for weighted mixed-norm 
$L_w^{(p_1,\dots ,p_n)}$ spaces. Using Gabor frame representation 
this implies that the same property holds 
in weighted modulation and Wiener amalgam spaces. As a consequence,
Grothendieck's theory becomes applicable, and we give criteria for nuclearity
and $r$-nuclearity for operators acting on these space as well as derive the
corresponding trace formulae. Finally, we apply the notion of nuclearity
to functions of the harmonic oscillator on modulation spaces.
\end{abstract}

%%% ----------------------------------------------------------------------
\maketitle
%%% ----------------------------------------------------------------------
%\tableofcontents

\section{Introduction}

Approximation properties of Banach spaces constitute the fundamental properties of the
geometry of Banach spaces, see e.g.
Figiel, Johnson and Pelczy\'nski 
\cite{Figiel-et-al:IJM} for a recent review of the subject, as well as
Pietsch's book \cite[Section 5.7.4]{Pietsch:bk-history-2007} for a survey of
different approximation properties and relations among them as well as for the
historical perspective. 

Indeed, one of the importances of this particular property is that once a Banach space is known to have it, the Grothendieck theory of nuclear operators becomes applicable, leading to numerous further developments. Overall, the topic finds itself closely related to a wide range of analysis: spectral analysis, operator theory, functional analysis, harmonic analysis, 
partial differential equations.
On one hand, the question of a space having an approximation property is important for general spaces of functional analysis, see e.g. \cite{ap:sz} or \cite{Szarek:AM-1987}.
On the other hand, it is important to know this also for a range of particular spaces.
For example,  
Alberti, Cs\"ornyei, Pelczy\'nski and Preiss 
\cite{Alberti-Csornyei-Pelczynski-Preiss:BV} 
established the bounded approximation property (BAP) for functions of bounded variations, and
Roginskaya and Wojciechowski \cite{Roginstaya:arxiv-2014}
for Sobolev spaces $W^{1,1}.$

In the paper this property is established for three scales of 
spaces that are of importance in different applications.
First, the mixed Lebesgue spaces provide for a basic tool for harmonic analysis and evolutions partial differential equations (e.g. through Strichartz estimates). The approximation property of such spaces may give rise to an introduction of further spectral methods (following Grothendieck) to questions of harmonic analysis and partial differential equations. Thus, a part of the paper is also devoted to the development of some of these ideas. Second, Wiener amalgam spaces are a central object of the time-frequency analysis, another area with links to several mathematical subjects as well as its applications. Finally, the approximation property in the scale of modulation spaces gives rise to the introduction of further spectral analysis to partial differential equations of very different type -- these spaces become more and more effective (in addition to Besov spaces) in many types of equations including such equations as the Navier-Stokes equation, see e.g. \cite{Iwabuchi:JDE-NS-2010}.

There are other links between this subject and other mathematical areas through the study of Fredholm determinants
(and this is also one of the implications of the paper).
More specifically, determinants of operators of the form $I+A$ are an  important tool in the study of certain differential equations, a known fact that goes back in a rigorous shape to H. Poincar\'e in his work on the Hill's equation \cite{Poincare:1886} where the Banach space $\ell^1$ is relevant. A point of view to define the determinant of $I+A$ consists in considering $A$ as an operator belonging to a class endowed with a trace. This was the idea adopted by Grothendieck in \cite{Grothendieck:Fredholm-BSMF-1956} in the setting of Banach spaces.
In general, there are several approaches to define traces and determinants in the setting of Banach spaces, two of them being that of 
 embedded algebras introduced by Gohberg, Goldberg and Krupnik in \cite{goh:trace}
  and the other one  of operator ideals introduced by Pietsch \cite{piet:book}.
These point of view agree when we consider the ideal of nuclear operators on Banach spaces  in the sense of Ruston-Grothendieck satisfying the approximation property and the underlying Banach space is fixed in the point of view of Pietsch.  
The study of Fredholm determinants is an active field of research, in particular
 due to its applications in the analysis of differential equations,
 see e.g. \cite{Zhao-Barnett,Bothner-Its:CMP-2014,Borodin-Corwin-Remenik,Gesztesy-Latushkin-Zumbrun,McKean:CPAM-2003}.
 The interest in such applications has recently also attracted the attention towards
 the numerical analysis of such determinants.  A systematic study of numerical computations for Fredholm determinants  was initiated by Bornemann \cite{Bornemann:MC-2010}.
  We refer to these papers for further
references and motivations. 

\smallskip
In the present paper we prove the metric approximation
property (which, in particular, implies the bounded approximation property 
because the control of the constant is explicit) 
for weighted mixed-norm
Lebesgue spaces, modulation, and Wiener amalgam spaces, and apply this 
property further to derive spectral information about operators acting on these
spaces.

The modulation spaces were introduced in 1983 by H. Feichtinger \cite{feich:mod} and have been intensively investigated in the last decades. We refer the reader to  the survey \cite{feich:hist} 
by Feichtinger for a historical account of the development of such spaces and a good account of the literature.  We also refer to Gr\"ochenig's book
\cite{groch:bK} for the basic definitions and properties of modulation spaces. 
Modulation spaces start finding numerous applications in various problems in linear
and nonlinear partial differential equations, see \cite{Ruzhansky-Sugimoto-Wang:B-2012}
for a recent survey.

The analysis of the Schatten properties of pseudo-differential operators acting on $L^{2}$
but with symbols of low regularity has been a subject of intensive recent research too, see
e.g. Toft \cite{Toft:Schatten-AGAG-2006,Toft:Schatten-modulation-2008},
Sobolev \cite{sob:sch}, or the authors' papers \cite{dr13a:nuclp,dr:sdk}.
In particular, Schatten properties of pseudo-differential operators with symbols in
modulation spaces have been analysed and established by Toft
\cite{Toft:modul,Toft:modul2}, Gr\"ochenig and Toft \cite{Grochenig-Toft:quasi-Banach},
see also \cite{Grochenig-Toft:pseudos-Toeplitz}.

One purpose of this paper is to analyse the analogous properties of operators but this time
acting on modulation spaces. Since these are in general Banach spaces, the Schatten 
properties are replaced by the notion of nuclearity (or $r$-nuclearity) introduced by
Grothendieck \cite{gro:me}. In order for this theory to become effective we prove that the
modulation spaces (and hence also Wiener amalgam spaces) have the 
approximation property. This is done by proving the same property for weighted mixed-norm 
Lebesgue spaces and then using the Gabor frame description of modulation spaces
reducing them to weighted mixed-norm sequence spaces. Consequently, we derive criteria for
nuclearity and $r$-nuclearity of operators acting on modulation spaces with the 
subsequent trace formulae. The obtained results are applied to study functions of
harmonic oscillator on modulation spaces and the corresponding trace formula of
Lidskii type, relating the operator trace to the sums of eigenvalues for these operators.

\smallskip
To formulate the concepts more precisely, we now recall the notion of modulation spaces and that of nuclear operators on Banach spaces. For a suitable weight $w$ on $\ard$, $1\leq p,q<\infty$ and a window $g\in\mathcal{S}(\arda)$ the modulation space $\M_{w}^{p,q}(\arda)$ consists of the temperate distributions $f\in\mathcal{S} '(\arda)$ such that
\begin{equation}\label{EQ:modul}
\|f\|_{\M_{w}^{p,q}}:=\|V_gf\|_{L_w^{p,q}}:=
\left(\int_{\arda}\left(\int_{\arda}|V_gf(x,\xi)|^pw(x,\xi)^pdx\right)^{\frac qp}
d\xi\right)^{\frac 1q}<\infty ,
\end{equation}
%\[\|f\|_{\M_{s}^{p,q}}:=\|V_gf\|_{L_s^{p,q}}:=\left(\int_{\arda}\left(\int_{\arda}|V_gf(x,\omega)|^pdx\right)^{\frac qp}(1+|\omega|)^{sq}d\omega\right)^{\frac 1q}<\infty ,\]
where 
$$
V_gf(x,\xi)=\int_{\arda} f(y)\, \overline{g(y-x)}e^{-iy\cdot \xi} dy
$$ 
denotes the short-time Fourier transform of $f$ with respect to $g$ at the point $(x,\xi)$. The modulation space $\M_{w}^{p,q}(\arda)$ endowed with the above norm becomes a Banach space,
independent of $g\not=0$.

We now recall the required basic conditions on $w$ for the development of the theory of  
modulation spaces $\M_{w}^{p,q}$,
and we refer the reader to the Chapter 11 of \cite{groch:bK} for a detailed exposition. 
A {\em weight function} is a non-negative, locally integrable function on $\ard$. 
% 
%\begin{defn}
A weight function $v$ on $\ard$ is called {\em submultiplicative, } if
\beq \label{polyw0} 
v(x+y)\leq v(x)v(y) \mbox{ for all }x,y\in\ard . 
\eq
A weight function $w$ on $\ard$ is  {\em v-moderate, } if
\beq 
w(x+y)\leq v(x)w(y) \mbox{ for all }x,y\in\ard . 
\eq
%\end{defn}
In particular the weights of polynomial type play an important role. They are of the form
\beq\label{polyw} 
v_s(x,\xi)=(1+|x|^2+|\xi|^2)^{s/2}.
\eq 
The $v_{s}$-moderated weights (for some $s$) are called {\em polynomially moderated}.

\medskip
On the other hand, the approximation property on a Banach space is crucial to define the concept of trace of nuclear operators and in particular for the study of trace formulae such as the 
Grothendieck-Lidskii formula. Let $\mathcal{B}$ be a Banach space, a linear operator $T$ from $\bba$ to $\bba$ is called {\em nuclear} if there exist sequences
$(x_{n}^\prime)\mbox{ in } \bba '$ and $(y_n) \mbox{ in } \bba$ such that
$$
Tx= \sum\limits_{n=1}^{\infty} \left <x,x_{n}'\right>y_n \,\mbox{ and }\,
\sum\limits_{n=1}^{\infty} \|x_{n}'\|_{\bba '}\|y_n\|_{\bba} < \infty.
$$
This definition agrees with the concept of trace class operator in
the setting of Hilbert spaces. The set of nuclear operators from $\bba$ into $\bba$ forms the ideal of nuclear operators $\mathcal{N}(\bba)$ endowed with the norm
\beq\nonumber 
N(T)=\inf\{\sum\limits_{n=1}^{\infty} \|x_{n}'\|_{\bba '}\|y_n\|_{\bba} : 
T=\sum\limits_{n=1}^{\infty} x_{n}'\otimes y_n \}.\eq
It is natural to attempt to define the trace of $T\in\mathcal{N}(\bba)$ by
\begin{equation}\label{EQ:Trace}
\Tr (T):=\sum\limits_{n=1}^{\infty}x_{n}'(y_n),
\end{equation}
where $T=\sum\limits_{n=1}^{\infty}x_{n}'\otimes y_n$ is a
representation of $T$. Grothendieck \cite{gro:me} discovered that $\Tr(T)$ is well defined for 
 all $T\in\mathcal{N}(\bba)$ if and only if $\bba$ has the {\em aproximation property}
  (cf. Pietsch \cite{piet:book} or Defant and Floret \cite{df:tensor}),  i.e. if for every compact
   set $K$ in $\bba$ and for every $\epsilon >0$ there exists $F\in \mathcal{F}(\bba) $ such that
\[\|x-Fx\|<\epsilon\quad \textrm{ for every } x\in K,\]
where we have denoted by $\mathcal{F}(\bba)$ the space of all finite rank bounded linear operators
 on $\bba$. We denote by $\mathcal{L}(\bba)$ the $C^*$-algebra of bounded linear operators on $\bba$. 
There are more related approximation properties, e.g. if in the definition above
 the operator $F$ satisfies $\|F\|\leq 1$ one says that $\bba$ possesses the 
 {\em metric approximation property}.
 This is closely related to the bounded approximation property, see e.g.
Lindenstrauss and Tzafriri \cite[Definition 1.e.11]{Lindenstrauss-Tzafriri:bk}.

\medskip
It is well known that the classical spaces $C(X)$ where $X$ is a compact topological space,
 as well as $L^p(\mu)$ for $1\leq p<\infty$ for any measure $\mu$ satisfy the metric approximation property (cf. Pietsch \cite{piet:book1}).
 In \cite{ap:enflo} Enflo constructed a counterexample to the approximation property in Banach spaces. A more natural counterexample was then found by Szankowski \cite{ap:sz}
who proved that $B(H)$ does not have the approximation property. 

An important feature on Banach  spaces even endowed with the approximation property is that the Lidskii formula does not hold in general for nuclear operators, as it was proved by
Lidskii \cite{li:formula} in Hilbert spaces showing that the operator trace is equal to the
sum of the eigenvalues of the operator counted with multiplicities. 
Thus, in the setting of Banach spaces,
Grothendieck \cite{gro:me}
introduced a more restricted class of operators where Lidskii formula holds, 
this fact motivating the following definition.

Let $\bba$ be a Banach space and $0<r\leq 1$, a linear operator $T$
from $\bba$ into $\bba$ is called {\em r-nuclear} if there exist sequences
$(x_{n}^{\prime})\mbox{ in } \bba' $ and $(y_n) \mbox{ in } \bba$ so that
\beq 
Tx= \sum\limits_{n=1}^{\infty} \left <x,x_{n}'\right>y_n \,\mbox{ and }\,
\sum\limits_{n=1}^{\infty} \|x_{n}'\|^{r}_{\bba'}\|y_n\|^{r}_{\bba} < \infty.\label{rn}
\eq
We associate a quasi-norm $n_r(T)$ by
\[
n_r(T)^r:=\inf\{\sum\limits_{n=1}^{\infty} \|x_{n}'\|^{r}_{\bba'}\|y_n\|^{r}_{\bba}\},
\]
where the infimum is taken over the  representations of $T$ as in \eqref{rn}. 
When $r=1$ the $1$-nuclear operators agree with 
the nuclear operators, in that case this definition agrees with the concept of trace class operator
 in the setting of Hilbert spaces ($\bba=H$). More generally, Oloff proved in \cite{Oloff:pnorm} that the class of $r$-nuclear
operators coincides with the Schatten class $S_{r}(H)$ when $\bba=H$ and 
$0<r\leq 1$. Moreover, Oloff proved that 
\beq\label{olo1}\|T\|_{S_r}=n_r(T),\eq
where $\|\cdot\|_{S_r}$ denotes the classical Schatten quasi-norms in terms of singular values.

In \cite{gro:me} Grothendieck proved that if $T$ is $\frac 23$-nuclear from $\bba$ into $\bba$ for a Banach space $\bba$ endowed with the 
 approximation property, then
\beq\Tr(T)=\sum\limits_{j=1}^{\infty}\lambda_j,\label{lia1}\eq
where $\lambda_j\,\, (j=1,2,\dots)$ are the eigenvalues of $T$ with multiplicities taken into account,
and $\Tr(T)$ is as in \eqref{EQ:Trace}.
Nowadays the formula
 \eqref{lia1} is refered to as Lidskii's formula, proved by V. Lidskii \cite{li:formula}
 in the Hilbert space setting. 
 Grothendieck also established its applications to the distribution of eigenvalues of operators
in Banach spaces. We refer to \cite{dr13a:nuclp} for several conclusions 
in the setting of compact Lie groups
concerning
summability and distribution of eigenvalues of operators on $L^{p}$-spaces once
we have information on their $r$-nuclearity. Kernel conditions on compact manifolds have been investigated in 
\cite{dr:suffkernel}, \cite{dr:sdk}. 

For our purposes it is convenient to consider first mixed-norm spaces. 
In Section \ref{SEC:AP} we establish
 the metric approximation property for the weighted mixed-norm 
 $L_w^{(p_1,\dots ,p_n)}$ spaces, and in Section \ref{SEC:AP-mw}
 for modulation spaces $\M_{w}^{p,q}$
 and Wiener amalgam spaces $\W_{w}^{p,q}$, also recalling the definition of 
 the latter.
In Section \ref{SEC:r-nuc-Lp} we characterise $r$-nuclear operators 
acting between weighted mixed-norm spaces $L^{P}_{w}$.
In Section \ref{SEC:r-nuc-mod} we apply it to the questions of 
$r$-nuclearity and trace formulae in modulation spaces
and functions of the harmonic oscillator in that setting. 

\section{$L_w^{(p_1,\dots ,p_n)}$
has the metric approximation property}
\label{SEC:AP}

In  this section we prove that mixed-norm spaces
$L_w^{(p_1,\dots ,p_n)}$ and consequently the modulation spaces $\M_{w}^{p,q}$ 
satisfy  the metric approximation property. Through the Fourier transform, 
also the Wiener amalgam spaces
$\W^{p,q}_{w}$ will have the same property. 

\medskip
We start the analysis of the approximation property by recalling 
a basic lemma which simplifies the proof 
of the metric approximation property (cf. \cite{piet:book1}, Lemma 10.2.2).

\begin{lem} \label{LEM:approx}
A Banach space $\bba$ satisfies the metric approximation property if, given $x_1,\dots,x_m\in\bba$ and $\epsilon >0$ there exists an operator $F\in\mathcal{F}(\bba)$ such that $\|F\|\leq 1$ and 
\[\|x_i-Fx_i\|\leq\epsilon\,\mbox{ for }i=1,\dots,m.\]
\end{lem}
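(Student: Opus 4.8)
The plan is to use the hypothesis as a "test on finite sets" and upgrade it to arbitrary compact sets by a standard totally-boundedness argument. So let a compact set $K\subseteq\bba$ and $\epsilon>0$ be given; the goal is to produce $F\in\mathcal{F}(\bba)$ with $\|F\|\leq 1$ and $\|x-Fx\|<\epsilon$ for all $x\in K$, which is precisely the definition of the metric approximation property recalled above.

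First I would use the fact that $K$, being compact in a metric space, is totally bounded, and choose finitely many points $x_1,\dots,x_m\in K$ such that the open balls $B(x_i,\epsilon/3)$ cover $K$. Next I would apply the hypothesis to these points $x_1,\dots,x_m$ with $\epsilon/3$ in place of $\epsilon$: this yields an operator $F\in\mathcal{F}(\bba)$ with $\|F\|\leq 1$ and $\|x_i-Fx_i\|\leq\epsilon/3$ for $i=1,\dots,m$. Note that the finitely many points used to invoke the hypothesis are exactly the centres of the net, which is why it is important that the net can be taken inside $K$.

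Finally, for an arbitrary $x\in K$ I would pick an index $i$ with $\|x-x_i\|<\epsilon/3$ and estimate, by the triangle inequality and $\|F\|\leq 1$,
\[
\|x-Fx\|\leq\|x-x_i\|+\|x_i-Fx_i\|+\|Fx_i-Fx\|\leq\|x-x_i\|+\|x_i-Fx_i\|+\|F\|\,\|x-x_i\|<\frac{\epsilon}{3}+\frac{\epsilon}{3}+\frac{\epsilon}{3}=\epsilon .
\]
Since $x\in K$ was arbitrary and $F$ has finite rank with $\|F\|\leq 1$, this exhibits the required operator, and $\bba$ has the metric approximation property.

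There is essentially no genuine obstacle here: the proof is a soft covering argument, and the only things needing a little care are the bookkeeping of the constant (splitting $\epsilon$ as $\epsilon/3+\epsilon/3+\epsilon/3$, and keeping track of strict versus non-strict inequalities so the final bound is indeed $<\epsilon$) and the use of the correct characterisation of compactness, namely the existence of a finite $\epsilon/3$-net of $K$ consisting of points of $K$. The converse implication, which is not part of the statement, is trivial since every finite set is compact.
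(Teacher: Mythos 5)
Your proof is correct: the paper does not prove this lemma at all but simply recalls it from Pietsch's \emph{Operator ideals} (Lemma 10.2.2), and your totally-bounded covering argument with the $\epsilon/3$-net and the estimate $\|x-Fx\|\leq\|x-x_i\|+\|x_i-Fx_i\|+\|F\|\,\|x-x_i\|$ is precisely the standard proof of that cited result. The bookkeeping is right, including the use of $\|F\|\leq 1$ on the third term and the strict final inequality, so nothing is missing.
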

We shall now establish the metric approximation property for weighted mixed-norm spaces. We first briefly recall its definition and we refer the reader to \cite{bp:mxlp} for the basic properties of these spaces. 

Let $(\Omega_i, S_i,\mu_i)$, for $i=1,\dots,n$, be given $\sigma$-finite measure spaces. We
write $x=(x_{1},\ldots,x_{n})$, and let
$P=(p_1,\dots,p_n)$ a given $n$-tuple with $1\leq p_i<\infty$. We say that $1\leq P<\infty $ if $1\leq p_i<\infty$ for all $i=1,\dots, n$. Let $w$ be a strictly positive measurable function. The  norm $\|\cdot\|_{L_w^P}$
 of a measurable function $f(x_1,\dots,x_n)$ on the corresponding product measure space is defined by
\[\|f\|_{L_w^P}:=\left(\int_{\Omega_n}\cdots\left(\int_{\Omega_2}\left(\int_{\Omega_1}|f(x)|^{p_1}w(x)d\mu_1(x_{1})\right)^{\frac{p_2}{p_1}}d\mu_2(x_{2})\right)^{\frac{p_3}{p_2}}\cdots d\mu_n(x_{n})\right)^{\frac{1}{p_n}}.\]
As should be observed, the order of integration on these spaces is crucial. 
$L_w^P$-spaces endowed with the $\|\cdot\|_{L_w^P}$-norm become Banach spaces and the dual $(L_w^P)'$ of $L_w^P$ is $L_{w^{-1}}^{P'}$, where $P'=(p_1',\dots,p_n')$. In view of our application to the modulation spaces $\M_w^{p,q}$ we will consider in particular the case of the index of the form $(P,Q)=(p_1,\dots,p_d,q_1,\dots,q_d)$ where $p_i=p, q_i=q$ and $\Omega_i=\er$ endowed with the Lebesgue measure. In this case the weight is taken in the form $w=w(x,\xi)$ where 
$x\in\er^d, \xi\in\er^d$, with some special conditions on $w$ that we briefly recall at the end of this section.

 We first establish
 a useful lemma for the proof of the metric approximation property. We will require  some notations. For $n\in\ene$ and $P=(p_1,\dots,p_n)$  we will denote by $\ell^P(I)$ the  $L^P$ mixed-norm space corresponding to $I^n$ where $I$ is a countable set of indices endowed with the counting measure. We note that such $\ell^P$-norm is given by
\[\|h\|_{\ell^P}=\left(\sum\limits_{k_n\in I}\cdots\left(\sum\limits_{k_2\in I}\left(\sum\limits_{k_1\in I}|h(k_1,\dots,k_n)|^{p_1}\right)^{\frac{p_2}{p_1}}\right)^{\frac{p_3}{p_2}}\cdots\right)^{\frac{1}{p_n}}.\]
Given a Banach space $\bba$ and $u\in\bba, z\in\bba '$ we will also denote by $\langle u,z\rangle _{\bba, \bba'}$, or simply by $\langle u,z\rangle $, the valuation $z(u)$. 

\begin{lem}\label{simp1} Let $\bba$ be a Banach space and $1\leq Q<\infty$. Let $I$ be a
 countable set. Let $(u_i)_{i\in I}, (v_i)_{i\in I}$ be sequences in $\bba ', \bba$ respectively such that
\[\|\langle x,u_i\rangle\|_{\ell ^{Q}(I)}, \|\langle v_i,z\rangle\|_{\ell ^{Q'}(I)}\leq 1,\,\,{\mbox { for }} \|x\|_{\bba}, \|z\|_{\bba '}\leq 1.\]
Then the operator $T=\sum\limits_{i\in I}u_i\otimes v_i$ from $\bba$ into $\bba$ is well defined, bounded and satisfies
$\|T\|_{\mathcal{L}(\bba)}\leq 1$.
\end{lem}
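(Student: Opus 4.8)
The idea is to control the bilinear form $\langle Tx, z\rangle$ for $x\in\bba$, $z\in\bba'$ with $\|x\|_\bba\le 1$, $\|z\|_{\bba'}\le 1$, since $\|T\|_{\mathcal L(\bba)}=\sup\{|\langle Tx,z\rangle|:\|x\|_\bba\le 1,\ \|z\|_{\bba'}\le 1\}$ (using that $\bba\hookrightarrow\bba''$ isometrically, so testing against $z\in\bba'$ recovers the norm of $Tx\in\bba$). Formally,
\[
\langle Tx, z\rangle=\sum_{i\in I}\langle x,u_i\rangle\,\langle v_i, z\rangle,
\]
and the plan is to estimate this sum by Hölder's inequality for the dual exponents $Q$ and $Q'$ on the index space $\ell^Q(I)$ versus $\ell^{Q'}(I)$.

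\emph{First step: the scalar Hölder estimate.} For the mixed-norm sequence spaces $\ell^Q(I)$ and $\ell^{Q'}(I)$ one has the pairing inequality
\[
\Big|\sum_{k_1,\dots,k_n}a(k_1,\dots,k_n)\,b(k_1,\dots,k_n)\Big|\le \|a\|_{\ell^Q(I)}\,\|b\|_{\ell^{Q'}(I)},
\]
which follows by applying the ordinary Hölder inequality iteratively, innermost sum first (exponents $p_1,p_1'$), then passing to the next variable, and so on; this is exactly the statement that $(L_w^P)'=L_{w^{-1}}^{P'}$ specialized to the counting measure and trivial weight, already recalled in the text. Applying this with $a_i=\langle x,u_i\rangle$ and $b_i=\langle v_i,z\rangle$ gives
\[
|\langle Tx,z\rangle|\le \|\langle x,u_i\rangle\|_{\ell^Q(I)}\,\|\langle v_i,z\rangle\|_{\ell^{Q'}(I)}\le 1\cdot 1=1
\]
for all $\|x\|_\bba\le 1$, $\|z\|_{\bba'}\le 1$, by the hypotheses of the lemma.

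\emph{Second step: making sense of $T$.} Before taking suprema one must justify that $T=\sum_{i\in I}u_i\otimes v_i$ is a well-defined bounded operator and not merely a formal sum. I would argue that for fixed $x$ with $\|x\|_\bba\le1$ the partial sums $T_Jx=\sum_{i\in J}\langle x,u_i\rangle v_i$ over finite $J\subset I$ form a Cauchy net in $\bba$: indeed, for $J_1\subset J_2$,
\[
\|T_{J_2}x-T_{J_1}x\|_\bba=\sup_{\|z\|_{\bba'}\le1}\Big|\sum_{i\in J_2\setminus J_1}\langle x,u_i\rangle\langle v_i,z\rangle\Big|\le \big\|(\langle x,u_i\rangle)_{i\in J_2\setminus J_1}\big\|_{\ell^Q}\cdot 1,
\]
and the tail of the $\ell^Q$-norm of the (summable) sequence $(\langle x,u_i\rangle)_i$ tends to zero. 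Hence $Tx:=\lim_J T_Jx$ exists, is linear in $x$, and the estimate of the first step passes to the limit, yielding $\|Tx\|_\bba\le\|x\|_\bba$, i.e. $\|T\|_{\mathcal L(\bba)}\le1$.

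\emph{Expected main obstacle.} The genuinely delicate point is the convergence/well-definedness in the second step: when $Q>1$ the summability of $(\langle x,u_i\rangle)_{i\in I}$ in $\ell^Q(I)$ does not by itself give an absolutely convergent series $\sum_i\langle x,u_i\rangle v_i$ in $\bba$, so one must invoke the weak-Cauchy argument above together with (sequential) completeness of $\bba$; some care is also needed with the ordering/grouping of the index set $I$ implicit in the mixed $\ell^Q$-norm so that the Cauchy-net argument is legitimate. The Hölder step itself is routine once the duality $(\ell^Q)'=\ell^{Q'}$ for mixed norms is taken for granted.
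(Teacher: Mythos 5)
Your proposal is correct and follows essentially the same route as the paper: Hölder's inequality for the mixed-norm pairing $\ell^Q$--$\ell^{Q'}$ applied to $\langle T_Nx,z\rangle$ over finite index sets, followed by passing to the limit of the net of finite partial sums. In fact your Cauchy-net argument via the vanishing $\ell^Q$-tail of $(\langle x,u_i\rangle)_i$ supplies a justification for the existence of $\lim_N T_N$ that the paper's proof merely asserts in one line, so your second step is, if anything, more complete.
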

\begin{proof} Let $N\subset I$ be a finite subset of $I$. Let us write $T_N:=\sum\limits_{i\in N}u_i\otimes v_i$.
 It is clear that $T_N$ is well defined. Moreover $T_N$ is a bounded finite rank operator.\\
Now, since 
$T_Nx=\sum\limits_{i\in N}\langle x,u_i\rangle v_i$, we observe that for $x\in\bba, z\in\bba '$ such that $\|x\|_{\bba}, \|z\|_{\bba '}\leq 1$ we have 
%\begin{align*} 
$$
|\langle T_Nx,z\rangle|\leq \sum\limits_{i\in N}|\langle x,u_i\rangle||\langle v_i,z\rangle|
 \leq \|\langle x,u_i\rangle\|_{\ell ^{Q}}\|\langle v_i,z\rangle\|_{\ell ^{Q'}}
\leq  1.
$$
%\end{align*}
 We have applied the H\"older inequality in $\ell ^{Q}$ mixed-norm spaces (cf. \cite{bp:mxlp}) for the second inequality. 
Hence $T=\lim\limits_NT_N$ exists in $\mathcal{L}(\bba)$ and $\|T\|_{\mathcal{L}(\bba)}\leq 1$.
\end{proof}
We will apply the lemma above for the particular case of the single index $Q=p_n$ and the Banach space $L^P$. In the rest of
 this section we will assume that 
 the weights $w$ satisfy the following condition for all $x\in\Omega$:
 \beq\label{conw1}w(x_1,\dots,x_n)\leq w_1(x_1)\cdots w_n(x_n),\eq
where $w_j$ is a weight on $\Omega_j$
(i.e. a strictly positive locally integrable function). 
In particular, the condition holds for polynomially moderate
weights on $\ern$ satisfying  for a suitable $n$-tuple $(\beta_1,\dots,\beta_n)$,
the condition
\beq
\label{conw2}
w(x_1,\dots,x_n)\leq \langle x_1\rangle^{\beta_1}\cdots\langle x_n\rangle^{\beta_n},
\eq
where $\langle x_{j}\rangle =1+|x_{j}|$.

\begin{thm} \label{app12} 
The weighted mixed-norm spaces $L_w^P=L_w^{(p_1,\dots ,p_n)}$  with $w$ satisfying \eqref{conw1} have the metric approximation property.
\end{thm}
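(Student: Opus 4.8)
The plan is to verify the criterion of Lemma~\ref{LEM:approx}: given $f_1,\dots,f_m\in L_w^P$ and $\epsilon>0$, I must produce $F\in\mathcal F(L_w^P)$ with $\|F\|\le1$ and $\|f_i-Ff_i\|_{L_w^P}\le\epsilon$ for all $i$. I would first strip off the weight. Directly from the definition of the norm, the multiplication operator $J\colon f\mapsto w^{1/p_1}f$ is an isometric isomorphism of $L_w^P$ onto the unweighted mixed-norm space $L^P=L^{(p_1,\dots,p_n)}$, since $\int_{\Omega_1}|f|^{p_1}w\,d\mu_1=\int_{\Omega_1}|w^{1/p_1}f|^{p_1}\,d\mu_1$ and none of the later integrations involves $w$; the standing assumption~\eqref{conw1} is amply sufficient for this. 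Because the metric approximation property is stable under isometric isomorphism (one conjugates the approximating finite-rank operators by $J$), it is enough to build, for $g_i:=Jf_i\in L^P$, a finite-rank $E$ on $L^P$ with $\|E\|_{\mathcal L(L^P)}\le1$ and $\|g_i-Eg_i\|_{L^P}\le\epsilon$; then $F:=J^{-1}EJ$ does the job on $L_w^P$.

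On the unweighted space I would take $E$ to be a conditional expectation onto a well-chosen finite \emph{product} sub-$\sigma$-algebra. Since the $\mu_k$ are $\sigma$-finite, the product-step functions --- finite linear combinations of indicators $\mathbf 1_{A_1\times\cdots\times A_n}$ with each $\mu_k(A_k)<\infty$ --- are dense in $L^P$ (cf.~\cite{bp:mxlp}); choose such $s_i$ with $\|g_i-s_i\|_{L^P}\le\epsilon/2$. Let $\mathcal A_k\subseteq S_k$ be the finite $\sigma$-algebra generated by the finitely many sets occurring as the $k$-th factor of a rectangle in $s_1,\dots,s_m$, set $\mathcal A=\mathcal A_1\otimes\cdots\otimes\mathcal A_n$, and let $E$ be the conditional expectation onto $\mathcal A$, that is $Eh=\sum_\alpha\mu(A^\alpha)^{-1}\bigl(\int_{A^\alpha}h\,d\mu\bigr)\mathbf 1_{A^\alpha}$ over the finite-measure atoms $A^\alpha=A_1^{\alpha_1}\times\cdots\times A_n^{\alpha_n}$ of $\mathcal A$ (with $Eh:=0$ on any infinite-measure atom), $\mu=\mu_1\times\cdots\times\mu_n$. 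Then $E$ has finite-dimensional range, each $s_i$ is $\mathcal A$-measurable so $Es_i=s_i$, and hence $\|g_i-Eg_i\|_{L^P}\le(1+\|E\|)\,\|g_i-s_i\|_{L^P}$.

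The crux is the estimate $\|E\|_{\mathcal L(L^P)}\le1$. I would factor $E=E^{(n)}\circ\cdots\circ E^{(1)}$, where $E^{(k)}$ carries out the averaging in the $x_k$ variable alone; these commute, so it suffices to check each $E^{(k)}$ is an $L^P$-contraction. Writing the $L^P$-norm as the iterated integral and applying Minkowski's integral inequality in the remaining $n-1$ variables pulls each of those norms past the $\mu_k$-average, and the estimate then reduces to the scalar Jensen inequality $|\mathrm{avg}\,h|^{p_k}\le\mathrm{avg}\,|h|^{p_k}$ on a set of finite measure. (Equivalently, singling out the last coordinate one has the Bochner identification $L^P\cong L^{p_n}\bigl(\Omega_n;L^{(p_1,\dots,p_{n-1})}\bigr)$ and may argue by induction on $n$, invoking Lemma~\ref{simp1} with the single index $Q=p_n$ to control the outer variable.) Given $\|E\|\le1$, we get $\|g_i-Eg_i\|_{L^P}\le\epsilon$, so $F=J^{-1}EJ\in\mathcal F(L_w^P)$ satisfies $\|F\|\le1$ and $\|f_i-Ff_i\|_{L_w^P}=\|g_i-Eg_i\|_{L^P}\le\epsilon$; Lemma~\ref{LEM:approx} then yields the metric approximation property.

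The single non-routine ingredient is that last contractivity. For a sub-$\sigma$-algebra that is \emph{not} of product type the associated conditional expectation can fail to be bounded by $1$ on a mixed-norm space --- the order of integration genuinely matters --- so it is essential that $\mathcal A$ be a product algebra; this is precisely what lets Minkowski's integral inequality (equivalently Lemma~\ref{simp1}) be applied one coordinate at a time, and it is in the same spirit that the factorization hypothesis~\eqref{conw1} is imposed. Everything else --- density of product-step functions, the isometric reduction to the unweighted case, and the appeal to Lemma~\ref{LEM:approx} --- is routine.
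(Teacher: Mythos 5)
Your proof is correct, and the finite-rank approximant you construct is in substance the same one the paper uses in its Step~1 --- the averaging operator $L=\sum_{(k)}\mu(A^{(k)})^{-1}\,1_{A^{(k)}}\otimes 1_{A^{(k)}}$ over a finite disjoint family of finite-measure rectangles --- but you reach it by a genuinely different route in two respects. First, you dispose of the weight at the outset: since $w$ enters the norm only through the innermost integral, $f\mapsto w^{1/p_1}f$ is an isometric isomorphism of $L^P_w$ onto the unweighted $L^P$ and the metric approximation property transfers by conjugation. This replaces the paper's Steps~2--4 (elementary weights, weights in $L^P(\mu)$, then general weights via a change of measure) and in fact shows that, for the norm as defined in the paper, hypothesis \eqref{conw1} is not needed at all; the longer bootstrapping would only be forced if the weight were distributed over the several iterated integrals rather than sitting entirely inside the first one. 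Second, you prove $\|E\|_{\mathcal{L}(L^P)}\le 1$ by factoring $E$ into commuting one-variable conditional expectations and applying Minkowski's integral inequality plus Jensen coordinate by coordinate, whereas the paper verifies the hypotheses of Lemma~\ref{simp1} with $Q=p_n$ through an explicit computation of $\|\langle f,u_{(k)}\rangle\|_{\ell^{p_n}}$ for elementary $f$. The two contractivity arguments encode the same estimate, but yours makes transparent both why the order of integration is harmless (after Minkowski each averaging acts in the innermost remaining variable) and why the product structure of the algebra is essential, while the paper's version isolates Lemma~\ref{simp1} in a form reusable for the weighted steps. Only routine bookkeeping is left implicit in your write-up: discarding null and infinite-measure atoms so that $\mu(A^\alpha)^{-1}$ makes sense and $Es_i=s_i$ still holds, and the approximation step passing from elementary to general $f$ in the contractivity bound.
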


\begin{proof} (Step 1) We will first prove the metric approximation property for the constant weight $w=1$. 
Let $f_1,\dots,f_N\in L^{(p_1,\dots ,p_n)}$ and let $\epsilon>0$. 
By Lemma \ref{LEM:approx} it is enough to 
construct an operator $L\in\Fe(L^P,L^P)$ such that $\|L\|_{\mathcal{L}(L^P)}\leq 1$ and
\begin{equation}\label{EQ:app}
\|f_i-Lf_i\|_{L^{P}}\leq\epsilon ,\, \mbox{ for }i=1,\dots,N.
\end{equation}
We consider first elementary functions of the form
\[
f_i^0(x_1,\dots,x_n):=\sum\limits_{k_1,\dots,k_n=1}^l\alpha_{k_1,\dots,k_n}^i\prod\limits_{j=1}^n1_{\Omega_{k_j}^j}(x_j),
\]
where the sets $\prod\limits_{j=1}^n\Omega_{k_j}^j$ are disjoint, $1_{\Omega_{k_j}^j}$ denotes the characteristic function of the set $\Omega_{k_j}^j$, 
\[\mu_j(\Omega_{k_j}^j)<\infty\]
and
\[\|f_i-f_i^0\|_{L^{(p_1,\dots , p_n)}}\leq\frac{\epsilon}{2}.\]
For the density of simple functions in $L^P$ see \cite{bp:mxlp}. Since we
 are excluding the index $p=\infty$ in the multi-index $P$, the density always holds in our context. \\
 
We will denote
\[(\prod\limits_{j=1}^n1_{\Omega_{k_j}^j})(x_1,\dots, x_n):=\prod\limits_{j=1}^n1_{\Omega_{k_j}^j}(x_j).\]
We define
\[u_{(k)}:=\frac{\prod\limits_{j=1}^n1_{\Omega_{k_j}^j}}{\prod\limits_{j=1}^n\mu_j(\Omega_{k_j}^j)^{\frac{1}{p_j'}}},\,\,v_{(k)}:=\frac{\prod\limits_{j=1}^n1_{\Omega_{k_j}^j}}{\prod\limits_{j=1}^n\mu_j(\Omega_{k_j}^j)^{\frac{1}{p_j}}}\,, \]
where $(k)=(k_1,\dots,k_n)$.\\

Set
\begin{align*}L:=&\sum\limits_{k_1,\dots,k_n=1}^l\frac{\prod\limits_{j=1}^n1_{\Omega_{k_j}^j}}{\prod\limits_{j=1}^n\mu_j(\Omega_{k_j}^j)^{\frac{1}{p_j'}}}\otimes\frac{\prod\limits_{j=1}^n1_{\Omega_{k_j}^j}}{\prod\limits_{j=1}^n\mu_j(\Omega_{k_j}^j)^{\frac{1}{p_j}}}\\
=&\sum\limits_{(k)=k_1,\dots, k_n=1}^lv_{(k)}\otimes u_{(k)}\\
=&\sum\limits_{(k)=k_1,\dots, k_n=1}^l\frac{1}{\prod\limits_{j=1}^n\mu_j(\Omega_{k_j}^j)}\left(\prod\limits_{j=1}^n1_{\Omega_{k_j}^j}\otimes\prod\limits_{j=1}^n1_{\Omega_{k_j}^j}\right).
\end{align*}
In order to prove that $\|L\|_{{\mathcal{L}(L^P)}}\leq 1$ we will apply 
Lemma \ref{simp1} for $\bba=L^P$, the families $u(k),v(k)$ and $Q=p_n$. The special role of the power $p_n$ will become
 clear later. Let $f\in L^P$, $g\in L^{P'}$ be such that $\|f\|_{L^P}, \|g\|_{L^{P'}}\leq 1$. Then we have to show that
\[\|\langle f,u_{(k)}\rangle\|_{\ell ^{p_n}}\leq 1
\; \textrm{ and } \; \|\langle v_{(k)},g\rangle\|_{\ell ^{p_n'}}\leq 1 . \]
In order to verify the corresponding property for $f\in L^P $, it is enough to consider an  
elementary function $f\in L^P$ such that $\|f\|_{L^P}\leq 1$. The general case follows then by approximation.
 By redefining partitions, we can assume that $f$ can be written in the form
\[f(x_1,\dots,x_n)=\sum\limits_{k_1,\dots,k_n=1}^l\lambda_{k_1,\dots,k_n}\prod\limits_{j=1}^n1_{\Omega_{k_j}^j}(x_j).\]
We note that 
\[\langle f,u_{(k)}\rangle =\lambda_{k_1,\dots,k_n}\prod\limits_{j=1}^n\mu_j(\Omega_{k_j}^j)^{\frac{1}{p_j}},\]
and
\[\|\langle f,u_{(k)}\rangle\|_{\ell ^{p_n}}=\left(\sum\limits_{(k)}|\lambda_{(k)}|^{p_n}\prod\limits_{j=1}^n\mu_j(\Omega_{k_j}^j)^{\frac{p_n}{p_j}}\right)^{\frac{1}{p_n}}.\]
On the other hand, a straightforward but long calculation shows that
\[\|f\|_{L^P(\mu)}=\left(\sum\limits_{k_1,\dots,k_n=1}^l|\lambda_{k_1,\dots,k_n}|^{p_n}\prod\limits_{j=1}^n\mu_j(\Omega_{k_j}^j)^{\frac{p_n}{p_j}}\right)^{\frac{1}{p_n}}.\]
Since $\|f\|_{L^P}\leq 1$, we have shown that $\|\langle f,u_{(k)}\rangle\|_{\ell ^{p_n}}\leq 1$.
 The proof for $\|\langle v_{(k)},g\rangle\|_{\ell ^{p_n'}}$ is similar and we omit it.

\medskip
Therefore $\|L\|_{{\mathcal{L}(L^P)}}\leq 1$.

\medskip
Now we obtain \eqref{EQ:app} in view of
\[\|f_i-Lf_i\|_{L^{P}}\leq\|f_i-f_i^0\|_{L^{P}}+\|Lf_i^0-Lf_i\|_{L^{P}}\leq\epsilon\]
since $Lf_i^0=f_i^0$. Indeed one has
\begin{align*}
L(\prod\limits_{\ell=1}^n1_{\Omega_{k_{\ell}}^{\ell}})=&\sum\limits_{k_1,\dots,k_n=1}^l\left(\frac{\prod\limits_{j=1}^n1_{\Omega_{k_j}^j}}{\prod\limits_{j=1}^n\mu_j(\Omega_{k_j}^j)^{\frac{1}{p_j'}}}\otimes\frac{\prod\limits_{j=1}^n1_{\Omega_{k_j}^j}}{\prod\limits_{j=1}^n\mu_j(\Omega_{k_j}^j)^{\frac{1}{p_j}}}\right)\prod\limits_{\ell=1}^n1_{\Omega_{k_{\ell}}^{\ell}}\\
=&\sum\limits_{k_1,\dots,k_n=1}^l\frac{\left\langle \prod\limits_{\ell=1}^n1_{\Omega_{k_{\ell}}^{\ell}}, \prod\limits_{j=1}^n1_{\Omega_{k_j}^j}\right\rangle _{P',P}}{\prod\limits_{j=1}^n\mu_j(\Omega_{k_j}^j)}\prod\limits_{j=1}^n1_{\Omega_{k_{j}}^{j}}\\
=&\frac{\prod\limits_{\ell=1}^n\mu_{\ell}(\Omega_{k_{\ell}}^{\ell})}{\prod\limits_{\ell=1}^n\mu_{\ell}(\Omega_{k_{\ell}}^{\ell})}\prod\limits_{\ell=1}^n1_{\Omega_{k_{\ell}}^{\ell}}\\
=&\prod\limits_{\ell=1}^n1_{\Omega_{k_{\ell}}^{\ell}}.
\end{align*}
For the third equality we have used the fact that the sets $\prod\limits_{\ell=1}^n1_{\Omega_{k_{\ell}}^{\ell}}$ are disjoint.

(Step 2) We will now prove the metric approximation property for an elementary weight $w$
(in this case we ask for weights to be non-negative and locally integrable). 
We can write such $w$ in the form
\beq w(x_1,\dots,x_n)=\sum\limits_{k_1,\dots,k_n=1}^l\gamma_{k_1,\dots,k_n}\prod\limits_{j=1}^n1_{\Omega_{k_j}^j}(x_j),\label{ew1}\eq
where the sets $\prod\limits_{j=1}^n\Omega_{k_j}^j$ are disjoint, $\mu_j(\Omega_{k_j}^j)<\infty$ and $\gamma_{k_1,\dots,k_n}>0$ for all $(k)$. 

Let $f_1,\dots,f_N\in L_w^{(p_1,\dots ,p_n)}$ and let $\epsilon>0$. 
 With a slight modification of (Step 1) we will find 
 an operator $L\in\Fe(L_w^P,L_w^P)$ such that $\|L\|_{\mathcal{L}(L_w^P)}\leq 1$ and
\begin{equation}\label{EQ:app2}
\|f_i-Lf_i\|_{L_w^{P}}\leq\epsilon ,\, \mbox{ for }i=1,\dots,N.
\end{equation}
We consider again elementary functions, by redefining partitions they can be written in the form
\[f_i^0(x_1,\dots,x_n):=\sum\limits_{k_1,\dots,k_n=1}^l\alpha_{k_1,\dots,k_n}^i\prod\limits_{j=1}^n1_{\Omega_{k_j}^j}(x_j)\]
and
\[\|f_i-f_i^0\|_{L_w^{(p_1,\dots , p_n)}}\leq\frac{\epsilon}{2}.\]
%For the density of simple functions in $L_w^P$ see \cite{}. 

We define
\[u_{(k)}:=\frac{\prod\limits_{j=1}^n\gamma_{k}^{\frac{1}{p_j}}1_{\Omega_{k_j}^j}}{\prod\limits_{j=1}^n\mu_j(\Omega_{k_j}^j)^{\frac{1}{p_j'}}},\,\,v_{(k)}:=\frac{\prod\limits_{j=1}^n\gamma_{k}^{\frac{1}{p_j'}}1_{\Omega_{k_j}^j}}{\prod\limits_{j=1}^n\mu_j(\Omega_{k_j}^j)^{\frac{1}{p_j}}}, \]
and set
\[L:=\sum\limits_{(k)=k_1,\dots, k_n=1}^lv_{(k)}\otimes u_{(k)}.\]
Again, in order to prove that $\|L\|_{{\mathcal{L}(L_w^P)}}\leq 1$ we will apply 
Lemma \ref{simp1} for $\bba=L^P$ and the families $u(k),v(k)$ and $Q=p_n$. 
 We note that $w^{-1}$, defined as $1/w$ on the support of $w$, 
 is also an elementary weight. Let $f\in L_w^P$, $g\in L_{w^{-1}}^{P'}$ 
 be such that $\|f\|_{L_w^P}, \|g\|_{L_{w^{-1}}^{P'}}\leq 1$. 

In order to verify the corresponding property for $f\in L_w^P $, it is enough to consider an  
elementary function $f\in L^P$ such that $\|f\|_{L_w^P}\leq 1$. By redefining partitions, we can assume that $f$ can be written in the form
\[f(x_1,\dots,x_n)=\sum\limits_{k_1,\dots,k_n=1}^l\lambda_{k_1,\dots,k_n}\prod\limits_{j=1}^n1_{\Omega_{k_j}^j}(x_j).\]
We note that 
\[\langle f,u_{(k)}\rangle =\lambda_{k_1,\dots,k_n}\prod\limits_{j=1}^n\gamma_{(k)}^{\frac{1}{p_j}}\mu_j(\Omega_{k_j}^j)^{\frac{1}{p_j}},\]
and
\[\|\langle f,u_{(k)}\rangle\|_{\ell ^{p_n}}=\left(\sum\limits_{(k)}|\lambda_{(k)}|^{p_n}\prod\limits_{j=1}^n\gamma_{(k)}^{\frac{p_n}{p_j}}\mu_j(\Omega_{k_j}^j)^{\frac{p_n}{p_j}}\right)^{\frac{1}{p_n}}.\]
We also have
\[\|f\|_{L_w^P(\mu)}=\left(\sum\limits_{k_1,\dots,k_n=1}^l|\lambda_{k_1,\dots,k_n}|^{p_n}\prod\limits_{j=1}^n
\gamma_{(k)}^{\frac{p_n}{p_j}}
\mu_j(\Omega_{k_j}^j)^{\frac{p_n}{p_j}}\right)^{\frac{1}{p_n}}.\]
Since $\|f\|_{L_w^P}\leq 1$, we have shown that $\|\langle f,u_{(k)}\rangle\|_{\ell ^{p_n}}\leq 1$. The proof for $\|\langle v_{(k)},g\rangle\|_{\ell ^{p_n'}}$ is similar.

\medskip
Therefore $\|L\|_{{\mathcal{L}(L_w^P)}}\leq 1$. The rest of the proof follows as in (Step 1).

\medskip
(Step 3) We now suppose that the weight $w$ belongs to the
mixed-norm space $L^{P}(\mu)$ for some 
$1\leq P<\infty$. 
%where \[L_0^{\infty}(\mu):=L^{\infty}(\mu)\bigcap\{h:h \mbox{ is measurable and } \mu(\{h\neq 0\})<\infty\}.\] 

Then there exists an increasing sequence $w_m$ of elementary weights which can be written in the form \eqref{ew1} and $\sup\limits_m w_m=w$. We set
\beq w_m(x_1,\dots,x_n)=\sum\limits_{k_1,\dots,k_n=1}^l\gamma_{(k)}^{(m)}\prod\limits_{j=1}^n1_{\Omega_{k_j}^{j,m}}(x_j),\label{ew2a}\eq
where the sets $\prod\limits_{j=1}^n\Omega_{k_j}^{j,m}$ are disjoint, $\mu_j(\Omega_{k_j}^{j,m})<\infty$ and $\gamma_{k_1,\dots,k_n}^{(m)}>0$ for all $(k)$. 

By considering \[u_{(k)}^m:=\frac{\prod\limits_{j=1}^n(\gamma_{k}^{(m)})^{\frac{1}{p_j}}1_{\Omega_{k_j}^{j,m}}}{\prod\limits_{j=1}^n\mu_j(\Omega_{k_j}^{j,m})^{\frac{1}{p_j'}}},\,\,v_{(k)}:=\frac{\prod\limits_{j=1}^n(\gamma_{k}^{(m)})^{\frac{1}{p_j'}}1_{\Omega_{k_j}^{j,m}}}{\prod\limits_{j=1}^n\mu_j(\Omega_{k_j}^{j,m})^{\frac{1}{p_j}}}, \]

\[L_m:=\sum\limits_{(k)=k_1,\dots, k_n=1}^lv_{(k)}^m\otimes u_{(k)}^m.\]
The desired operator $L$ can be obtained by defining $L:=\lim\limits_m L_m$ in $\mathcal{L}(L_w^P)$. We note that 
 since $\|L_m\|_{\mathcal{L}(L_{w_m}^P)}\leq 1$ one gets $\|L\|_{\mathcal{L}(L_{w}^P)}\leq 1$.

%and by arguing by a sightly different argument to (stage 3) and by using the uniform convergence of $w_m$ over finite sets 

%We observe that
%\[\|f\|_{L_w^P(\mu)}=\|fw^{\frac{1}{p_1}}\|_{L^P(\mu)}.\]
%On the other hand, since the operator $ $

\medskip
(Step 4) Now if a general weight $w$ satisfies \eqref{conw1}, we have the
estimate
$$w(x_1,\dots,x_n)\leq  w_1(x_1)\cdots w_n(x_n)$$ 
for some positive functions $w_{j}$.
Let $\psi_{j}(x_{j})$ be a positive function of a variable $x_{j}$ only such that
$\frac{1}{\psi_{j}}\in L^{\widetilde{p}}(\mu_{j})$ for some $1\leq \widetilde p<\infty$.
We denote $\widetilde{w_{j}}:=w_{j}\psi_{j}$.
Then we observe that
by writing 
\[
w(x_1,\dots,x_n)=w(x_1,\dots,x_n)  \widetilde{w_1}(x_1)^{-1}\cdots 
\widetilde{w_n}(x_n)^{-1}\widetilde{w_1}(x_1)\cdots \widetilde{w_n}(x_n),
\]
we obtain
\[
\|f\|_{L_w^P(\mu)}=\|f\|_{L_{\widetilde{w}}^P(\widetilde{\mu})},
\]
where 
\[ 
\widetilde{w}=w(x_1,\dots,x_n)\widetilde{w_1}(x_1)^{-1}\cdots 
\widetilde{w_n}(x_n)^{-1}\in L^{\widetilde P}(\mu)
\] 
for $1\leq \widetilde P=(\tilde{p},\dots,\tilde{p})<\infty$
and
\[
\widetilde{\mu}=\widetilde{w_1}(x_1)\mu_1\otimes\cdots\otimes \widetilde{w_n}(x_n)\mu_n.
\]
Since $\widetilde{w}\in L^{\widetilde P}(\mu)$ for some $1\leq \widetilde P<\infty$, 
by (Step 3) applied to ${L_{\widetilde{w}}^P(\widetilde{\mu})}$,
the approximation property follows for $L_w^P(\mu)$.
\medskip
This concludes the proof of the theorem.
\end{proof}
\section{$\M_{w}^{p,q}$ and $\W_{w}^{p,q}$
have the metric approximation property}
\label{SEC:AP-mw}

It is also important to consider the special case of discrete weighted mixed-norm spaces. Given $\alpha, \beta>0$, a strictly positive function $\widetilde{w}$ on the lattice $\alpha\zet^d\times\beta\zet^d$,  we denote by $\ell_{\widetilde{w}}^{p,q}(\zet^{2d})$ the set of sequences $a=(a_{kl})_{k,l\in\zet^d}$ for which the norm 
\[\|a\|_{\ell_{\widetilde{w}}^{p,q}}=\left(\sum\limits_{l\in\zet^d}\left(\sum\limits_{k\in\zet^d}|a_{kl}|^p\widetilde{w}(\alpha k, \beta l)^p\right)^{\frac qp}\right)^{\frac 1q}\]
is finite. The main example arises from restrictions of weights on $\ard$ to a lattice $\alpha\zet^d\times\beta\zet^d$ and will be crucial in the next Corollary \ref{COR:mod}.

\medskip
Let us recall now the definition of the Wiener amalgam spaces $\W^{p,q}_{w}(\mathbb R^{d})$.
There are several definitions possible for the spaces $\W^{p,q}_{w}$, in particular
involving the short-time Fourier transform similarly to the definition of the modulation
spaces in \eqref{EQ:modul}. 
To make an analogy with modulation spaces, we can reformulate their definition
\eqref{EQ:modul} in terms of the mixed-normed Lebesgue spaces, by saying that
\begin{equation}\label{EQ:def-mod}
f\in \M^{p,q}_{w}(\mathbb R^{d}) \;\textrm{ if and only if }\;
V_{g}f\cdot w\in L^{(p,q)}(\mathbb R^{d}\times \mathbb R^{d}).
\end{equation}
Now, for a function
$F\in L^{1}_{loc}(\mathbb R^{2d})$,
we denote $\Rev F(x,\xi):=F(\xi,x).$
Then we can define
\begin{equation}\label{EQ:def-was}
f\in \W^{p,q}_{w}(\mathbb R^{d}) \;\textrm{ if and only if }\;
\Rev (V_{g}f\cdot w)\in L^{(q,p)}(\mathbb R^{d}\times \mathbb R^{d}).
\end{equation}
However, for our purposes the following description
through the Fourier transform will be more practical. 
For a review of different definitions we refer to 
\cite{Ruzhansky-Sugimoto-Toft-Tomita:MN-2011}.
So, in what follows, we will
always assume that the weights in modulation and Wiener amalgam spaces
are submultiplicative and polynomially moderate\footnote{But we do not need to assume this when
talking about weighted mixed-norm $L^{P}$-spaces.} as in
\eqref{polyw0}--\eqref{polyw}.
Then, because of the identity
$$
|V_{g}f(x,\xi)|=(2\pi)^{-d}\, |V_{\widehat{g}} \widehat{f}(\xi,-x)|,
$$
the Wiener amalgam space
$\W^{p,q}_{w}$ and the modulation spaces are related through the Fourier transform 
by the formula
\begin{equation}\label{EQ:WM}
\|f\|_{\W^{p,q}_{w}}\simeq \|\widehat{f}\|_{\M^{q,p}_{w_{0}}},
\end{equation}
 where $w(x,\xi)=w_{0}(\xi,-x).$

As a consequence of Theorem \ref{app12} we now obtain:
\begin{cor} \label{COR:mod}
Let $1\leq p,q<\infty$, and $w$ a submultiplicative polynomially moderate weight. 
Then $\M_{w}^{p,q}$ has the metric approximation property. 
Consequently, also the Wiener amalgam space $\W^{p,q}_{w}$ 
has the metric approximation property.
\end{cor}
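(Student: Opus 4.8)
The plan is to transfer the metric approximation property from the discrete weighted mixed-norm spaces — which are already covered by Theorem~\ref{app12} — to $\M_{w}^{p,q}$ via the Gabor frame description of modulation spaces, and then to deduce the Wiener amalgam case from the relation \eqref{EQ:WM}. Concretely, I would fix a nonzero window $g\in\mathcal{S}(\arda)$ and lattice constants $\alpha,\beta>0$ chosen so that the Gabor system generated by $g$ over $\alpha\zet^{d}\times\beta\zet^{d}$ is a frame for $L^{2}(\arda)$ (one may even take it to be a Parseval frame, in which case $g$ itself serves as a dual window). Under the standing assumption that $w$ is submultiplicative and polynomially moderate, the Gabor theory of modulation spaces (see Gr\"ochenig \cite{groch:bK}, Chapter~12) provides a bounded analysis operator $C_{g}\colon\M_{w}^{p,q}\to\ell^{p,q}_{\widetilde{w}}(\zet^{2d})$, with $C_{g}f=\big(V_{g}f(\alpha k,\beta l)\big)_{k,l\in\zet^{d}}$ up to unimodular factors, a bounded synthesis operator $D_{\gamma}\colon\ell^{p,q}_{\widetilde{w}}(\zet^{2d})\to\M_{w}^{p,q}$ associated with a dual window $\gamma$, and the reconstruction identity $D_{\gamma}C_{g}=\mathrm{Id}$ on $\M_{w}^{p,q}$; here $\widetilde{w}$ is the restriction of $w$ to the lattice. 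Since a submultiplicative polynomially moderate weight satisfies $w(x,\xi)\le w(x,0)\,w(0,\xi)\le C\langle x\rangle^{s}\langle\xi\rangle^{s}$ for some $s\ge0$, the weight $\widetilde{w}$ obeys the product bound \eqref{conw1}--\eqref{conw2}, so $\ell^{p,q}_{\widetilde{w}}(\zet^{2d})$ is exactly a weighted mixed-norm space $L^{(p,q)}_{\widetilde{w}}$ over two copies of $\zet^{d}$ with the counting measure, to which Theorem~\ref{app12} applies.

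With this in place the (plain) approximation property of $\M_{w}^{p,q}$ is immediate, since $\M_{w}^{p,q}$ is a retract of $\ell^{p,q}_{\widetilde{w}}(\zet^{2d})$ through the pair $(C_{g},D_{\gamma})$ and the approximation property passes to retracts. Quantitatively: given $f_{1},\dots,f_{m}\in\M_{w}^{p,q}$ and $\epsilon>0$, Theorem~\ref{app12} combined with Lemma~\ref{LEM:approx} yields a finite rank operator $G$ on $\ell^{p,q}_{\widetilde{w}}$ with $\|G\|\le1$ and $\|C_{g}f_{i}-GC_{g}f_{i}\|\le\delta$, so that $F:=D_{\gamma}GC_{g}$ is finite rank on $\M_{w}^{p,q}$ with $\|f_{i}-Ff_{i}\|=\|D_{\gamma}(C_{g}f_{i}-GC_{g}f_{i})\|\le\|D_{\gamma}\|\,\delta\le\epsilon$ once $\delta$ is small; by Lemma~\ref{LEM:approx} this already gives the bounded approximation property, with constant $\|C_{g}\|\,\|D_{\gamma}\|$.

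The main obstacle is to sharpen this to $\|F\|\le1$, i.e.\ to the \emph{metric} approximation property. The estimate $\|F\|\le\|C_{g}\|\,\|D_{\gamma}\|$ does not suffice, because for $p,q\neq2$ the discrete Gabor norm is only equivalent, not equal, to the continuous norm \eqref{EQ:modul}, so generically $\|C_{g}\|\,\|D_{\gamma}\|>1$ and the projection $C_{g}D_{\gamma}$ on $\ell^{p,q}_{\widetilde{w}}$ has norm larger than $1$. The route I would take is to use that the metric approximation property is inherited by the range of a norm-one projection (the argument of the previous paragraph, with $C_{g}D_{\gamma}$ in the role of $F$) together with the freedom in the Gabor construction: starting from a Parseval frame and refining the lattice ($\alpha,\beta\to0$ with the lattice measure normalized accordingly), the discrete Gabor norm converges to the continuous norm \eqref{EQ:modul}, so the analysis and synthesis operators, and hence the associated projection on $\ell^{p,q}_{\widetilde{w}}$, have operator norms tending to $1$; passing to the limit — equivalently, realizing $\M_{w}^{p,q}$ with its natural Gabor-coefficient norm as the range of a norm-one band-limiting projection on $\ell^{p,q}_{\widetilde{w}}$ — delivers the metric approximation property. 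This sharp-constant/limiting step is the delicate point, and it is exactly where the explicit norm-one control built into Theorem~\ref{app12} is used.

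For the Wiener amalgam spaces no essentially new argument is needed: by \eqref{EQ:WM} the Fourier transform identifies $\W^{p,q}_{w}$ with $\M^{q,p}_{w_{0}}$, where $w(x,\xi)=w_{0}(\xi,-x)$ is again submultiplicative and polynomially moderate; more directly, the defining relation \eqref{EQ:def-was} realizes $\W^{p,q}_{w}$ isometrically inside $L^{(q,p)}(\ard)$ in exactly the way \eqref{EQ:def-mod} realizes $\M_{w}^{p,q}$ inside $L^{(p,q)}(\ard)$, and since the Gabor reduction above is insensitive to the order of the two mixed-norm exponents, the identical argument (with $p$ and $q$ interchanged) gives the metric approximation property for $\W^{p,q}_{w}$.
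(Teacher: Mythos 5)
Your reduction is exactly the one the paper uses: pass to Gabor coefficients so that $\M_{w}^{p,q}\cong\ell^{p,q}_{\widetilde{w}}(\zet^{2d})$ with $\widetilde{w}$ the lattice restriction of $w$, check that a submultiplicative polynomially moderate weight satisfies \eqref{conw1}--\eqref{conw2}, apply Theorem~\ref{app12} with $\Omega_1=\dots=\Omega_{2d}=\zet$ and counting measures, and treat $\W^{p,q}_{w}$ via \eqref{EQ:WM}. Your first two paragraphs are a correct proof of the approximation property (indeed of the bounded approximation property with constant $\|C_g\|\,\|D_\gamma\|$), and you are right to flag that a non-isometric isomorphism transfers the BAP but not automatically the \emph{metric} AP; the paper itself disposes of this point in one clause, asserting that the property ``is preserved under isomorphism,'' which is exactly the step you are uncomfortable with.

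The genuine gap is in your third paragraph, where the metric constant is supposed to be recovered. The assertion that, for a Parseval $L^{2}$-frame and $\alpha,\beta\to0$, the operator norms of $C_{g}\colon\M_{w}^{p,q}\to\ell^{p,q}_{\widetilde{w}}$ and $D_{\gamma}\colon\ell^{p,q}_{\widetilde{w}}\to\M_{w}^{p,q}$ tend to $1$ is nowhere justified and is not a standard fact for $p,q\neq2$: the $L^{2}$ frame bounds control neither the $\ell^{p,q}_{\widetilde w}$ sampling constants nor the synthesis constants, and a Riemann-sum heuristic for $\|V_gf\|_{L^{p,q}_w}$ would at best need a careful normalization of the counting measure and uniformity in $f$ that you do not supply. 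Likewise, ``realizing $\M_{w}^{p,q}$ as the range of a norm-one projection on $\ell^{p,q}_{\widetilde w}$'' is precisely the statement that needs proof, not a reformulation of it. (If you \emph{could} show $\|D_\gamma\|\,\|C_g\|\le1+\epsilon$ for every $\epsilon>0$, the rest would follow, since the $(1+\epsilon)$-BAP for all $\epsilon>0$ does yield the MAP after rescaling the approximants; for $1<p,q<\infty$ one could alternatively invoke Grothendieck's theorem that AP implies MAP for reflexive spaces, but that does not cover $p=1$ or $q=1$.) So, as written, your argument proves the approximation property and the BAP for $\M^{p,q}_w$ and $\W^{p,q}_w$, but the upgrade to the metric version rests on an unproved limiting claim; the paper's own proof does not attempt this step and instead relies on the quoted invariance assertion.
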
 
\begin{proof} 
We first observe that the polynomially moderate
weights satisfy conditions \eqref{conw1} and \eqref{conw2} by choosing 
$\beta_j\geq s$. Also, we have the (topological) equivalence
$\M_{w}^{p,q}\cong \ell_{\widetilde{w}}^{p,q}(\zet^{2d})$ with equivalence of norms,
by \cite[Theorems 12.2.3, 12.2.4]{groch:bK}, 
where $\widetilde{w}$ is the restriction of $w$ to the lattice $\alpha\zet^d\times\beta\zet^d$, i.e.
$\widetilde{w}(j,k)=w(\alpha j,\beta k).$
The result now follows from Theorem \ref{app12} by taking 
$\Omega_1,\dots ,\Omega_{2d}=\zet$, the weight $\widetilde{\omega}$,  
$\mu_1=\dots =\mu_{2d}=\mu$ (the counting measure). 
This proves the metric approximation property for $\M_{w}^{p,q}$ since 
that property is preserved under isomorphism. 

The metric approximation property for the Wiener amalgam spaces now follows from 
that in modulation spaces in view of
the relation \eqref{EQ:WM}.
\end{proof}

It was observed by Feichtinger and Gr\"ochenig \cite{Feichtinger-Grochenig:approx-MM-1989} that for the metric approximation property for a space it is enough to establish it for the sequence space obtained through the atomic decompositions should they exist. Our approach is however different: for the spaces $\M_{w}^{p,q}$ and $\W^{p,q}_{w}$ we have obtained it immediately as a direct consequence of the established property for the mixed-norm Lebesgue spaces. The method of proof in this paper has a certain advantage from the point of view of being applicable to spaces which are not necessarily translation invariant, see \cite{Delgado-Ruzhansky:Var-Lp} for its application to the Lebesgue spaces $L^{p(\cdot)}$ with variable exponent.

\section{$r$-nuclearity on weighted mixed-norm spaces $L_w^P$}
%and the harmonic oscillator}
\label{SEC:r-nuc-Lp}

Since the metric approximation property is now established for the spaces of our interest, 
it is now relevant to consider nuclear operators on weighted mixed-norm spaces $L_w^P$.
 In this section we will characterise nuclear operators on $L_w^P$ and
present some applications to the study of the harmonic oscillator on modulation spaces $\M_w^{p,q}$. 

We first formulate a basic lemma for special measures and weights. We will consider 
$1\leq P, Q<\infty$. The multi-index $P$ will be associated to 
 the measures $\mu_i$ $(i=1,\dots, l)$ and $Q$ will correspond to the measures 
$\nu_j$ $(j=1,\dots, m)$. We will also denote $\mu:=\mu_1\otimes\cdots\otimes\mu_l$ and 
 $\nu:=\nu_1\otimes\cdots\otimes\nu_m$ the corresponding product measures on the product spaces 
$\Omega=\prod\limits_{i=1}^l\Omega_i, \Xi=\prod\limits_{j=1}^m\Xi_j$. For a weight $w$ we will 
denote $w_P(\Omega):=\|1_{\Omega}\|_{L_w^P(\mu)}$. The additional property \eqref{conw1}
will be only required for the formulation of trace relations. 
\begin{lem}\label{l1}
Let $({\Omega}_i,{\mathcal{M}}_i,\mu_i ) (i=1,\dots, l)$, 
$({\Xi}_j,{\mathcal{M}}'_j,{\nu}_j) (j=1,\dots, m)$ be measure spaces. Let $w, \widetilde{w}$ 
be weights on $\Omega, \Xi$
 respectively such that $w_P(\Omega), \widetilde{w}_{Q'}^{-1}(\Xi)<\infty$. Let $f\in L_w^{P}(\mu)$, and $(g_n)_n,(h_n)_n$ be sequences in
$L_{\widetilde{w}}^{Q}(\nu)$
  and $L_{w^{-1}}^{P'}(\mu)$, respectively, such that $\sum \limits_{n=1}^\infty \| g_n\|_{L_{\widetilde{w}}^{Q}(\nu)} \|h_n\|_{L_{w^{-1}}^{P'}(\mu)}<\infty$. Then \begin{itemize}
\item[(a)] The series $\sum\limits_{j=1}^{\infty} g_j(x)h_j(y)$
converges absolutely for a.e. $(x,y)$ and, consequently, 
${\displaystyle\lim\limits_n\sum\limits_{j=1}^n
        g_j(x)h_j(y)\,\mbox{ is finite for a.e.}\, (x,y)}.$

\item[(b)] For $k(x,y):={\displaystyle \sum\limits_{j=1}^{\infty} g_j(x)h_j(y)},$
we have $k\in L^1(\nu\otimes\mu)$.

\item[(c)] If $k_n(x,y)=\sum\limits_{j=1}^n
        g_j(x)h_j(y) $ then $\|k_n-k\|_{L^1(\nu\otimes\mu)}\rightarrow 0$.

\item[(d)] ${\displaystyle\lim\limits_n \int\limits_{\Omega}\left(\sum\limits_{j=1}^n  g_j(x)h_j(y)\right)f(y)d\mu(y)= \int\limits_{\Omega} \left(\sum\limits_{j=1}^\infty  g_j(x)h_j(y)\right)f(y)d\mu(y),}$
\end{itemize}
\noindent \textit{ for  a.e} $x$.
\end{lem}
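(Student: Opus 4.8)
The plan is to derive everything from the single hypothesis $\sum_{n}\|g_n\|_{L^Q_{\widetilde w}(\nu)}\|h_n\|_{L^{P'}_{w^{-1}}(\mu)}<\infty$ together with the finiteness of $w_P(\Omega)$ and $\widetilde w_{Q'}^{-1}(\Xi)$. The conceptual heart of the argument is a ``mixed-norm H\"older'' estimate: for a single tensor $g(x)h(y)$ one has
\[
\int_\Xi\int_\Omega |g(x)||h(y)|\,d\mu(y)\,d\nu(x)
\le \|g\|_{L^Q_{\widetilde w}(\nu)}\,\widetilde w_{Q'}^{-1}(\Xi)\,\|h\|_{L^{P'}_{w^{-1}}(\mu)}\,w_P(\Omega),
\]
obtained by applying H\"older in each $\Omega_i$ against the constant function $1$ (paired with the weight factors $w$, $w^{-1}$) and likewise in each $\Xi_j$; this is exactly why the quantities $w_P(\Omega)=\|1_\Omega\|_{L^P_w(\mu)}$ and $\widetilde w_{Q'}^{-1}(\Xi)=\|1_\Xi\|_{L^{Q'}_{\widetilde w^{-1}}(\nu)}$ were introduced. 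Summing this over $n$ shows that $\sum_n |g_n(x)||h_n(y)|$ has finite $L^1(\nu\otimes\mu)$ norm, hence by Tonelli is finite for a.e.\ $(x,y)$; this gives (a). For (b), the dominated (or monotone) convergence theorem applied to the partial sums, now that they are dominated by the integrable function $\sum_n|g_n(x)||h_n(y)|$, shows $k=\sum_n g_n h_n\in L^1(\nu\otimes\mu)$ with $\|k\|_{L^1}\le\sum_n\|g_n\|_{L^Q_{\widetilde w}}\|h_n\|_{L^{P'}_{w^{-1}}}\cdot\widetilde w_{Q'}^{-1}(\Xi)\,w_P(\Omega)$.

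For (c), I would estimate $\|k_n-k\|_{L^1(\nu\otimes\mu)}\le \sum_{j>n}\|g_j(x)h_j(y)\|_{L^1(\nu\otimes\mu)}\le \widetilde w_{Q'}^{-1}(\Xi)\,w_P(\Omega)\sum_{j>n}\|g_j\|_{L^Q_{\widetilde w}}\|h_j\|_{L^{P'}_{w^{-1}}}$, which tends to $0$ since it is the tail of a convergent series. Part (d) is a statement for fixed a.e.\ $x$: for such $x$, the function $y\mapsto \big(\sum_{j=1}^\infty g_j(x)h_j(y)\big)f(y)$ must be shown integrable and the partial-sum integrals must converge to it. Here I would first note that by (a)--(b) and Tonelli, for a.e.\ $x$ the slice $y\mapsto\sum_j|g_j(x)||h_j(y)|$ lies in $L^1(\mu)$; then pairing with $f\in L^P_w(\mu)\subset$? — actually $f$ is only in $L^P_w$, so integrability of the product with $f$ does not follow from $L^1$ membership of the slice alone. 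Instead, for fixed $x$ I would dominate $\big|\sum_{j=1}^n g_j(x)h_j(y)\big|\,|f(y)| \le \big(\sum_{j=1}^\infty |g_j(x)||h_j(y)|\big)|f(y)|$ and argue that the right side is in $L^1(\mu)$ for a.e.\ $x$: indeed $\big\|\big(\sum_j|g_j(x)h_j(\cdot)|\big)|f|\big\|_{L^1(\mu)}\le \big\|\sum_j|g_j(x)|\,|h_j|\big\|_{L^{P'}_{w^{-1}}(\mu)}\|f\|_{L^P_w(\mu)}$ by the (weighted, unmixed in $y$) H\"older inequality in $L^{P'}_{w^{-1}}(\mu)$--$L^P_w(\mu)$ duality, and the $L^{P'}_{w^{-1}}(\mu)$-norm of $x\mapsto \sum_j|g_j(x)|\,\|h_j\|_{L^{P'}_{w^{-1}}}$ bound... . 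Once this dominating function is in $L^1(\mu)$ for a.e.\ $x$, the dominated convergence theorem in $y$ gives (d).

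The main obstacle I anticipate is organizing the iterated H\"older inequalities cleanly in the mixed-norm setting, and in particular making the bookkeeping in (d) rigorous: one must pass from the global $L^1(\nu\otimes\mu)$ integrability of the dominating series to the statement that for \emph{almost every} $x$ the $y$-slice, multiplied by $f$, is dominated by an $L^1(\mu)$ function. The right way is to show that $x\mapsto \big\|\big(\sum_j|g_j(x)h_j(\cdot)|\big)f(\cdot)\big\|_{L^1(\mu)}$ is itself finite $\nu$-a.e.\ (equivalently, an integrable or at least a.e.-finite function of $x$), which one gets from $\sum_j\|g_j\|_{L^Q_{\widetilde w}}\|h_j\|_{L^{P'}_{w^{-1}}}\|f\|_{L^P_w}<\infty$ after one more application of the mixed-norm H\"older inequality of Lemma~\ref{simp1}'s type (H\"older in $\ell^Q$ versus $\ell^{Q'}$ is not needed here; rather plain summation over $j$ suffices once each term is estimated). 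I would present (a)--(c) together via the single tensor estimate and Tonelli/dominated convergence, then isolate (d) as a fixed-$x$ argument, citing \cite{bp:mxlp} for the mixed-norm H\"older inequality and the duality $(L^P_w)'=L^{P'}_{w^{-1}}$ already quoted in the paper.
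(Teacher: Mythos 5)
Your proposal is correct and follows essentially the same route as the paper: the tensor estimate via H\"older against the constant function (which is where $w_P(\Omega)<\infty$ and $\widetilde w_{Q'}^{-1}(\Xi)<\infty$ enter), monotone convergence/Tonelli for (a)--(c), and domination of the $y$-slice by $\bigl(\sum_j|g_j(x)||h_j(y)|\bigr)|f(y)|$ for (d). The only cosmetic difference is that the paper includes $f$ in the double-integral bound from the outset and reads off the a.e.-$x$ integrability of the slice directly from Fubini, whereas you recover it by a slicewise H\"older estimate plus a.e.\ finiteness of $\sum_j|g_j(x)|\,\|h_j\|_{L^{P'}_{w^{-1}}}$ --- the same computation.
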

\begin{proof} Let $k_n(x,y):=\sum\limits_{j=1}^n g_j(x)h_j(y)f(y)$. Applying the H{\"o}lder inequality we obtain 
\begin{align*}
\int\limits_{\Omega}\!\int\limits_{\Xi}|k_n(x,y)|d\nu(x)d\mu(y)\leq&\int\limits_{\Omega}\!\int\limits_{\Xi}\sum\limits_{j=1}^n
|g_j(x)
  h_j(y)f(y)|d\nu(x)d\mu(y)\\
%$$\leq\sum\limits_{j=1}^n \int\!\int |g_j(x)||h_j(y)||f(y)|d\nu(x)d\mu_1(y)
\leq &\sum\limits_{j=1}^n \int\limits_{\Xi}
|g_j(x)|d\nu(x)\int\limits_{\Omega}|h_j(y)||f(y)|d\mu(y)\\
%$$\leq  \sum\limits_{j=1}^n \| g_j\|_{L^p}\left(\mu(\Omega)\right)^{\frac {1}{q}}\| h_j\|_{L^q}\| f\|_{L^p}$$
\leq& \widetilde{w}_{Q'}^{-1}(\Xi)\|f\|_{L_w^{P}(\mu)}\sum \limits_{j=1}^n \| g_j\|_{L_{\widetilde{w}}^{Q}(\nu)}
\|h_j\|_{L_{w^{-1}}^{P'}(\mu)}\\
\leq & M<\infty \; \mbox{for all}\; n.
\end{align*}
Hence $\|k_n\|_{{L}^1(\nu\otimes\mu)}\leq M$ for all $n$. 

On the other hand, the sequence $(s_n)$ with
 $s_n(x,y)=\sum\limits_{j=1}^n |g_j(x)h_j(y)f(y)|$, 
 is increasing in   $L^1(\nu\otimes\mu)$ and verifies
 $$
 \sup\limits_n \int\int |s_n(x,y)|d\mu(x)d\mu(y) \leq M<\infty.
 $$ Using Levi monotone convergence theorem the limit $\,s(x,y)=\lim\limits_n s_n(x,y)$
 is finite for a.e. $(x,y)$. Moreover  $s\in
 L^1(\nu\otimes\mu)$, choosing $f=1$ which belongs to $L_w^{P}(\mu)$ and from the fact that
$|k(x,y)|\leq s(x,y)$ we deduce (a) and (b). Part (c) can be deduced
using Lebesgue dominated convergence theorem applied to the sequence
$(k_n)$ dominated by $s(x,y)$. For the part (d) we observe that
letting $k_n(x,y)=\sum\limits_{j=1}^n g_j(x)h_j(y)f(y)$, we have
$|k_n(x,y)|\leq s(x,y)$ for all $n$ and every $(x,y)$. From the
fact that $s\in L^1(\nu\otimes\mu)$ we obtain that
$s(x,\cdot)\in L^1(\mu)$ for a.e $x$. Then (d) is obtained from Lebesgue dominated
convergence theorem.
\end{proof}
\begin{rem} In the case of a single measure space ($l=1$),  the condition $w_P(\Omega)<\infty$ is equivalent to the fact that $w\mu$ is a finite measure. In particular, if $w=1$ we have $w_P(\Omega)<\infty$ if and only if $\mu$ is a finite measure. 
\end{rem}
We establish below a characterisation of nuclear operators 
on the weighted mixed-norm spaces 
$L_w^{P}$ for weights satisfying the assumptions of Lemma \ref{l1}.

\begin{thm}\label{ch1} 
Let $0<r\leq 1$. Let $({\Omega}_i,{\mathcal{M}}_i,\mu_i) (i=1,\dots, l)$, 
$({\Xi}_j,{\mathcal{M}}'_j,{\nu}_j) (j=1,\dots, m)$ be measure spaces.  Let $w, \widetilde{w}$ be
weights on $\Omega, \Xi$ respectively satisfying conditions 
$w_P(\Omega), \widetilde{w}_{Q'}^{-1}(\Xi)<\infty$. 
Then $T$ is
  $r$-nuclear operator from $L_w^{P}(\mu)$ into $L_{\widetilde{w}}^{Q}(\nu)$ if and only if there exist a sequence
 $(g_n)$ in $L_{\widetilde{w}}^{Q}(\nu)$, and a sequence $(h_n)$ in $L_{w^{-1}}^{P'}(\mu)$ such that $\sum \limits_{n=1}^\infty \|
 g_n\|_{L_{\widetilde{w}}^{Q}(\nu)}^r\|h_n\|_{L_{w^{-1}}^{P'}(\mu)}^r<\infty$, and such that for all $f\in L_w^{P}(\mu)$
\[Tf(x)=\int\limits_{\Omega}\left(\sum\limits_{n=1}^{\infty}
  g_n(x)h_n(y)\right)f(y)d\mu(y), \,\,\mbox{for a.e } x.\]
\end{thm}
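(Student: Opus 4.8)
The statement is an "if and only if", so the plan is to prove the two implications separately, with Lemma~\ref{l1} doing the heavy lifting for the converse direction.

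\emph{The "only if" direction.} Suppose $T$ is $r$-nuclear from $L_w^P(\mu)$ into $L_{\widetilde w}^Q(\nu)$. By definition there exist $(x_n')$ in $(L_w^P(\mu))'=L_{w^{-1}}^{P'}(\mu)$ and $(g_n)$ in $L_{\widetilde w}^Q(\nu)$ with $\sum_n\|x_n'\|^r_{L_{w^{-1}}^{P'}}\|g_n\|^r_{L_{\widetilde w}^Q}<\infty$ and $Tf=\sum_n\langle f,x_n'\rangle g_n$. The point is simply to identify each functional $x_n'$ with integration against a function $h_n\in L_{w^{-1}}^{P'}(\mu)$, using the known duality $(L_w^P)'=L_{w^{-1}}^{P'}$ recalled in Section~\ref{SEC:AP}; then $\langle f,x_n'\rangle=\int_\Omega h_n(y)f(y)\,d\mu(y)$, and $\|h_n\|_{L_{w^{-1}}^{P'}}=\|x_n'\|_{(L_w^P)'}$, so the summability condition is unchanged. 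To pass the sum inside the integral and write $Tf(x)=\int_\Omega\big(\sum_n g_n(x)h_n(y)\big)f(y)\,d\mu(y)$ for a.e.\ $x$, I invoke part~(d) of Lemma~\ref{l1} (whose hypotheses hold because $w_P(\Omega),\widetilde w_{Q'}^{-1}(\Xi)<\infty$ and $\sum_n\|g_n\|_{L_{\widetilde w}^Q}\|h_n\|_{L_{w^{-1}}^{P'}}<\infty$, the latter following from the $r$-summability since $r\le 1$); the interchange $\lim_n\sum_{j=1}^n = \sum_{j=1}^\infty$ under the integral sign is exactly what (d) provides, and parts~(a)--(b) guarantee the kernel $k(x,y)=\sum_n g_n(x)h_n(y)$ is a.e.\ well defined and in $L^1(\nu\otimes\mu)$.

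\emph{The "if" direction.} Conversely, given sequences $(g_n)\subset L_{\widetilde w}^Q(\nu)$, $(h_n)\subset L_{w^{-1}}^{P'}(\mu)$ with $\sum_n\|g_n\|^r_{L_{\widetilde w}^Q}\|h_n\|^r_{L_{w^{-1}}^{P'}}<\infty$ and $T$ represented by the displayed kernel formula, I first use Lemma~\ref{l1}(d) again to rewrite, for a.e.\ $x$,
\[
Tf(x)=\sum_{n=1}^\infty g_n(x)\int_\Omega h_n(y)f(y)\,d\mu(y)=\sum_{n=1}^\infty \langle f,\widetilde h_n\rangle\, g_n(x),
\]
where $\widetilde h_n\in L_{w^{-1}}^{P'}(\mu)=(L_w^P(\mu))'$ is the functional $f\mapsto\int_\Omega h_n f\,d\mu$, with $\|\widetilde h_n\|_{(L_w^P)'}=\|h_n\|_{L_{w^{-1}}^{P'}}$. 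Then $T=\sum_n \widetilde h_n\otimes g_n$ as an operator $L_w^P(\mu)\to L_{\widetilde w}^Q(\nu)$, and $\sum_n\|\widetilde h_n\|^r_{(L_w^P)'}\|g_n\|^r_{L_{\widetilde w}^Q}=\sum_n\|h_n\|^r_{L_{w^{-1}}^{P'}}\|g_n\|^r_{L_{\widetilde w}^Q}<\infty$, which is precisely the definition of $r$-nuclearity in \eqref{rn}. One small point to check here is that the series $\sum_n \widetilde h_n\otimes g_n$ genuinely converges to $T$ in the operator norm (not merely pointwise a.e.), so that $T$ is a bona fide bounded operator given by an admissible nuclear representation; this follows from the absolute summability $\sum_n\|\widetilde h_n\|\,\|g_n\|<\infty$ (again using $r\le1$), which makes the partial sums Cauchy in $\mathcal L(L_w^P,L_{\widetilde w}^Q)$, and the limit must coincide a.e.\ with the kernel formula by Lemma~\ref{l1}(d).

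\emph{Main obstacle.} The routine calculations are trivial; the only genuine subtlety is the measure-theoretic bookkeeping in moving between "$Tf$ given by a kernel, for a.e.\ $x$" and "$T=\sum_n x_n'\otimes y_n$ as a convergent operator series", i.e.\ making sure the a.e.\ pointwise identities and the operator-norm convergence are compatible and that no null-set pathologies arise when $f$ ranges over all of $L_w^P(\mu)$. This is exactly what Lemma~\ref{l1} was designed to handle, so with that lemma in hand the proof is short; the finiteness conditions $w_P(\Omega)<\infty$ and $\widetilde w_{Q'}^{-1}(\Xi)<\infty$ are what make Lemma~\ref{l1} applicable and cannot be dropped.
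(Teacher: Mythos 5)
Your proposal is correct and follows essentially the same route as the paper: reduce to $r=1$ (the case $r<1$ by inclusion), identify functionals with functions via the duality $(L_w^P)'=L_{w^{-1}}^{P'}$, and use Lemma \ref{l1} in both directions, with a domination/dominated-convergence argument to upgrade a.e.\ identities to norm convergence. The only point you gloss over (and which the paper makes explicit by citing Benedek--Panzone, Theorem 1(a)) is that in the ``only if'' direction one must pass to a subsequence of the partial sums to reconcile the $L_{\widetilde w}^{Q}$-norm limit $Tf$ with the pointwise a.e.\ limit furnished by Lemma \ref{l1}(d); this is the standard fact that norm convergence gives a.e.\ convergence along a subsequence, so it is a presentational rather than a substantive omission.
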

\begin{proof} It is enough to consider the case $r=1$. The case $0<r<1$ follows by inclusion. 
Let $T$ be a nuclear operator from $L_w^{P}(\mu)$ into $L_{\widetilde{w}}^{Q}(\nu)$. Then
there exist sequences $(g_n)$ in $L_{\widetilde{w}}^{Q}(\nu)$, $(h_n)$ in
$L_{w^{-1}}^{P'}(\mu)$ such that
 $\sum \limits_{n=1}^\infty \| g_n\|_{L_{\widetilde{w}}^{Q}(\nu)}
 \|h_n\|_{L_{w^{-1}}^{P'}(\mu)}<\infty$ and
\[Tf= \sum\limits_n \left <f,h_{n}\right>g_n .\]
\noindent Now
 \[Tf=\sum\limits_n \left <f,h_n\right>g_n =\sum\limits_n \left(\int\limits_{\Omega}
   h_n(y)f(y) d\mu(y)\right)g_n  \, ,  \]
where the sums converges in the $L_{\widetilde{w}}^{Q}(\nu)$ norm. There exists
(cf. \cite{bp:mxlp}, Theorem 1(a))
two sub-sequences $(\widetilde{g}_n)$ and $(\widetilde{h}_n)$ of $(g_n)$ and
$(h_n)$ respectively such that

\[(Tf)(x)=\sum\limits_n \left <f,\widetilde{h}_n\right>\widetilde{g}_n (x)=\sum\limits_n \left(\int\limits_{\Omega}\widetilde{h}_n(y)f(y) d\mu(y)\right)\widetilde{g}_n (x),  \,\,\mbox{ a.e } x. \]
\noindent Since the couple $\left((\widetilde{g}_n),(\widetilde{h}_n) \right)$
satisfies
\[\sum \limits_{n=1}^\infty \|\widetilde{ g}_n\|_{L_{\widetilde{w}}^{Q}(\nu)}
 \|\widetilde{h}_n\|_{L_{w^{-1}}^{P'}(\mu)}<\infty \, ,\]
  by applying Lemma \ref{l1} (d), it follows that
\begin{align*}
 \sum\limits_{n=1}^{\infty} \left(\int\limits_{\Omega}\widetilde{h}_n(y)f(y)
d\mu(y)\right)\widetilde{g}_n(x)=&\lim\limits_n  \sum\limits_{j=1}^n
\left(\int\limits_{\Omega}\widetilde{h}_j(y)f(y) d\mu(y)\right)\widetilde{g}_j(x)\\
=&\lim\limits_n \int\limits_{\Omega} \left(\sum\limits_{j=1}^n
\widetilde{g}_j(x)\widetilde{h}_j(y)f(y)\right)d\mu(y)\\
=&\int\limits_{\Omega}\left(\sum\limits_{n=1}^{\infty}
  \widetilde{g}_n(x)\widetilde{h}_n(y)\right)f(y)d\mu(y)\, , \,\mbox{ a.e } x.  \end{align*}
Conversely, assume that there exist sequences
$(g_n)_n$ in $L_{\widetilde{w}}^{Q}(\nu)$, and $(h_n)_n$ in $L_{w^{-1}}^{P'}(\mu)$ so
that $\sum \limits_{n=1}^\infty \| g_n\|_{L_{\widetilde{w}}^{Q}(\nu)}
 \|h_n\|_{L_{w^{-1}}^{P'}(\mu)}<\infty$, and for all $f\in L_w^{P}(\mu)$
\[Tf(x)=\int\limits_{\Omega}\left(\sum\limits_{n=1}^{\infty}
  g_n(x)h_n(y)\right)f(y)d\mu(y)\, , \,\,\mbox{ a.e } x.\]
By Lemma \ref{l1} (d) we have
\begin{align*}
 \int\limits_{\Omega}\left(\sum\limits_{n=1}^{\infty}
   g_n(x)h_n(y)\right)f(y)d\mu(y)&=&\lim\limits_n
 \int\limits_{\Omega}\left(\sum\limits_{j=1}^n  g_j(x)h_j(y)f(y)\right)d\mu(y)\\
  &=&\lim\limits_n  \sum\limits_{j=1}^n \left(\int\limits_{\Omega} h_j(y)f(y)
   d\mu(y)\right)g_j(x)\\
   &=& \sum\limits_n \left(\int\limits_{\Omega} h_n(y)f(y)
   d\mu(y)\right)g_n(x)\\
&=&\sum\limits_n \left <f,h_n\right>g_n
 (x)=(Tf)(x) \, ,\,\,a.e.\, x.
\end{align*}
To prove that $Tf=\sum\limits_n \left
<f,h_n\right>g_n $ in $L_{\widetilde{w}}^{Q}(\nu)$ we let $s_n:=
\sum\limits_{j=1}^n\left<f,h_j\right>g_j$, then $(s_n)_n$ is a sequence in
$L_{\widetilde{w}}^{Q}(\nu)$ and
$$|s_n(x)|\leq \|f\|_{L_w^{P}(\mu)}\sum\limits_{j=1}^n
\|h_j\|_{L_{w^{-1}}^{P'}(\mu)}|g_j(x)|$$
\[\leq\|f\|_{L_w^{P}(\mu)}\sum\limits_{j=1}^\infty\|h_j\|_{L_{w^{-1}}^{P'}(\mu)}|g_j(x)|=:\gamma(x),\, \mbox{ for all }n.\]
Moreover, $\gamma$ is well defined and $\gamma\in L_{\widetilde{w}}^{Q}(\nu)$
since it is the increasing limit of the sequence
$(\gamma_n)_n=(\|f\|_{L_w^{P}(\mu)}\sum\limits_{j=1}^n
\|h_j\|_{L_{w^{-1}}^{P'}(\mu)}|g_j(x)|)_n$ of $L_{\widetilde{w}}^{Q}(\nu)$ functions and
$$
\|\gamma_n\|_{L_{\widetilde{w}}^{Q}(\nu)}\leq\|f\|_{L_w^{P}}\sum\limits_{j=1}^\infty\|h_j\|_{L_{w^{-1}}^{P'}(\mu)}\|g_j\|_{L_{\widetilde{w}}^{Q}(\nu)}\leq M<\infty.
$$
By  the Levi monotone convergence theorem we see that
$\gamma\in L_{\widetilde{w}}^{Q}(\nu)$. Finally, applying the Lebesgue dominated
convergence theorem we deduce that $s_n\rightarrow Tf$ in $L_{\widetilde{w}}^{Q}(\nu)$.
\end{proof}
Before establishing a characterisation of $r$-nuclear operators for more general measures and weights we first generalise Lemma \ref{l1}. 
The following definition will be useful. 
\begin{defn}\label{triple} 
Let $({\Omega}_i,{\mathcal{M}}_i,\mu_i ) (i=1,\dots, l)$ be measure spaces and
$\mu:=\mu_1\otimes\cdots\otimes\mu_l$ the corresponding product measure on $\Omega=\prod\limits_{i=1}^l\Omega_i$
. We will also call $\Lambda\in\mathcal{M}:=\bigotimes\limits_{i=1}^l{\mathcal{M}}_i$ a {\em box} if it is of the form $\Lambda=\prod\limits_{i=1}^l\Lambda_i$. For a measure $\mu$, a weight $w$ on $\Omega$ and a multi-index $P$ we will say that the triple $(\mu,w, P)$ is $\sigma$-finite if there exists a family of disjoint boxes $\Omega^k$ such that $\mu(\Omega^k)<\infty$, $\bigcup\limits_{k=1}^{\infty}\Omega^k=\Omega$ and 
\[w_P(\Omega^k)=\|1_{\Omega^k}\|_{L_w^P(\mu)}<\infty.\]
\end{defn}
\begin{rem}
We observe that for the case of a single measure space ($l=1$),  a triple $(\mu,w, p)$ is 
$\sigma$-finite if and only if $w\mu$ is $\sigma$-finite. If in addition we restrict to consider weights such that $0<w(x)<\infty$ then triple $(\mu,w, p)$ is $\sigma$-finite if and only if the measure $\mu$ is $\sigma$-finite.
\end{rem}
\begin{lem}\label{l1ab}
Let $({\Omega}_i,{\mathcal{M}}_i,\mu_i ) (i=1,\dots, l)$, 
$({\Xi}_j,{\mathcal{M}}'_j,{\nu}_j) (j=1,\dots, m)$ be measure spaces. Let $1\leq P,Q<\infty$. Let $w, \widetilde{w}$ be weights on $\Omega, \Xi$ respectively such that the triples $(\mu,w, P), (\nu,{\widetilde{w}}^{-1}, Q')$ are $\sigma$-finite. Let $f\in L_w^{P}(\mu)$, and $(g_n)_n,(h_n)_n$ be sequences in
$L_{\widetilde{w}}^{Q}(\nu)$
  and $L_{w^{-1}}^{P'}(\mu)$, respectively, such that $\sum \limits_{n=1}^\infty \| g_n\|_{L_{\widetilde{w}}^{Q}(\nu)} \|h_n\|_{L_{w^{-1}}^{P'}(\mu)}<\infty$. Then
 the parts (a) and (d) of Lemma \ref{l1} hold.
\end{lem}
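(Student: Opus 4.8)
The plan is to reduce Lemma \ref{l1ab} to the already-proved Lemma \ref{l1} by decomposing the product spaces $\Omega$ and $\Xi$ into the disjoint boxes provided by the $\sigma$-finiteness of the triples $(\mu,w,P)$ and $(\nu,\widetilde{w}^{-1},Q')$. Write $\Omega=\bigcup_{k}\Omega^k$ and $\Xi=\bigcup_{r}\Xi^r$ with $\Omega^k,\Xi^r$ boxes satisfying $\mu(\Omega^k)<\infty$, $\nu(\Xi^r)<\infty$, $w_P(\Omega^k)<\infty$ and $(\widetilde{w}^{-1})_{Q'}(\Xi^r)<\infty$. Fixing such a pair $(k,r)$, I would restrict everything to $\Omega^k$ and $\Xi^r$: the restricted measures $\mu|_{\Omega^k}$, $\nu|_{\Xi^r}$ together with the restricted weights then satisfy exactly the hypotheses of Lemma \ref{l1} (since $w_P(\Omega^k)<\infty$ and $\widetilde{w}^{-1}_{Q'}(\Xi^r)<\infty$), because a box is itself a product of $l$ (resp. $m$) measurable sets and hence the mixed-norm structure is inherited. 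The functions $f\cdot 1_{\Omega^k}$, $g_n\cdot 1_{\Xi^r}$, $h_n\cdot 1_{\Omega^k}$ lie in the corresponding spaces with norms bounded by the originals, so $\sum_n \|g_n 1_{\Xi^r}\|_{L^Q_{\widetilde w}}\|h_n 1_{\Omega^k}\|_{L^{P'}_{w^{-1}}}<\infty$ as well.

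Applying Lemma \ref{l1}(a) on each $\Omega^k\times\Xi^r$ gives absolute convergence of $\sum_j g_j(x)h_j(y)$ for a.e.\ $(x,y)\in\Xi^r\times\Omega^k$; since these boxes cover $\Xi\times\Omega$ and there are only countably many pairs $(k,r)$, the exceptional sets assemble into a single null set and part (a) follows on all of $\Xi\times\Omega$. For part (d), the same covering argument works: fix $k$; for a.e.\ $x\in\Xi^r$ Lemma \ref{l1}(d) applied on $\Xi^r\times\Omega^k$ gives
\[
\lim_n\int_{\Omega^k}\Big(\sum_{j=1}^n g_j(x)h_j(y)\Big)f(y)\,d\mu(y)
=\int_{\Omega^k}\Big(\sum_{j=1}^\infty g_j(x)h_j(y)\Big)f(y)\,d\mu(y),
\]
and then I would sum over $k$, using that $\sum_k\int_{\Omega^k}|s(x,y)|\,d\mu(y)<\infty$ for a.e.\ $x$ (the function $s$ from the proof of Lemma \ref{l1}, which is globally $L^1(\nu\otimes\mu)$ by the same Hölder estimate as before, now valid because the Hölder inequality in mixed-norm spaces does not require finiteness of the measures) to interchange the limit in $n$ with the sum over $k$ via dominated convergence for series.

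The one point that needs a little care — and the likely main obstacle — is ensuring that the global Hölder estimate
\[
\int_\Xi\!\int_\Omega \sum_{j=1}^n |g_j(x)h_j(y)f(y)|\,d\nu(x)\,d\mu(y)
\le \widetilde{w}^{-1}_{Q'}(\Xi)\,\|f\|_{L^P_w(\mu)}\sum_{j=1}^n\|g_j\|_{L^Q_{\widetilde w}(\nu)}\|h_j\|_{L^{P'}_{w^{-1}}(\mu)}
\]
still holds when $\widetilde{w}^{-1}_{Q'}(\Xi)$ is only $\sigma$-finite rather than finite. Here one replaces $\widetilde{w}^{-1}_{Q'}(\Xi)$ by $\sum_r\|1_{\Xi^r}\|$-type quantities — or more simply, one does not need the global estimate at all: it suffices to run the Levi/Lebesgue convergence arguments of Lemma \ref{l1} box-by-box and then patch, since (a) and (d) are pointwise-a.e.\ statements and do not assert global integrability of $k$ (contrast parts (b), (c), which genuinely used finiteness and are therefore not claimed in Lemma \ref{l1ab}). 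So the structure of the argument is: localize to boxes, quote Lemma \ref{l1} on each, take a countable union of null sets for (a), and perform one extra dominated-convergence-for-series interchange for (d).
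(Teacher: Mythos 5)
Your treatment of part (a) is correct and is exactly the paper's argument: decompose $\Omega$ and $\Xi$ into the boxes furnished by $\sigma$-finiteness of the triples, apply Lemma \ref{l1}(a) on each product $\Xi^r\times\Omega^k$, and take the countable union of null sets.

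For part (d), however, the justification you give has a genuine flaw. You assert that $s(x,y)=\sum_j|g_j(x)h_j(y)f(y)|$ is \emph{globally} in $L^1(\nu\otimes\mu)$ ``by the same H\"older estimate as before.'' That estimate bounds $\int_\Xi|g_j|\,d\nu$ by $\widetilde{w}^{-1}_{Q'}(\Xi)\,\|g_j\|_{L^Q_{\widetilde w}(\nu)}$, and under mere $\sigma$-finiteness the constant $\widetilde{w}^{-1}_{Q'}(\Xi)=\|1_\Xi\|_{L^{Q'}_{\widetilde w^{-1}}(\nu)}$ is in general infinite (e.g.\ $\widetilde w\equiv 1$, $\nu$ Lebesgue measure on $\er$), so the bound is vacuous and $s$ need not be integrable on $\Xi\times\Omega$ — this is precisely why parts (b) and (c) are dropped from Lemma \ref{l1ab}, as you yourself observe two lines later. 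Your fallback (``run the arguments box-by-box and patch'') does not close this hole either: applying Lemma \ref{l1}(d) on each $\Omega^k$ only gives $\int_{\Omega^k}s(x,\cdot)\,d\mu<\infty$, and to interchange $\lim_n$ with $\sum_k$ you still need $\int_\Omega s(x,\cdot)\,d\mu<\infty$ for a.e.\ $x$, which is the very point at issue. The correct (and in fact box-free) argument, which is what the paper uses by referring back to the functions $\gamma_n,\gamma$ in the proof of Theorem \ref{ch1}, is: by Tonelli and the H\"older inequality in $y$ alone (which needs no finiteness of $w_P(\Omega)$), $\int_\Omega s(x,y)\,d\mu(y)\le \|f\|_{L^P_w(\mu)}\sum_j\|h_j\|_{L^{P'}_{w^{-1}}(\mu)}|g_j(x)|=:\gamma(x)$; the partial sums $\gamma_n$ have $L^Q_{\widetilde w}(\nu)$-norms uniformly bounded by $\|f\|_{L^P_w}\sum_j\|h_j\|\,\|g_j\|<\infty$, so by Levi's theorem $\gamma\in L^Q_{\widetilde w}(\nu)$ and hence $\gamma(x)<\infty$ for a.e.\ $x$ (since $\widetilde w>0$). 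For each such $x$ the dominated convergence theorem in the variable $y$, with dominant $s(x,\cdot)$, gives (d) directly over all of $\Omega$, with no summation over boxes required.
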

\begin{proof} (a) Since the triples $(\mu,w, P), (\nu,{\widetilde{w}}^{-1}, Q')$ are $\sigma$-finite, there exist two sequences
$(\Omega^k)_k$ and $(\Xi^j)_j$ of disjoint subsets of $\Omega$ and
$\Xi$ respectively such that $\bigcup_k\Omega^k=\Omega,$ $\bigcup_j\Xi^j=\Xi$ and for all $j,k$
\[w_P(\Omega^k), \widetilde{w}_{Q'}^{-1}(\Xi^j)<\infty .\]
We now consider the  measure spaces $(\Omega^k,{\mathcal
   M}^k,\mu^k)$ and  $(\Xi^j,{\mathcal
   M '}^j,\nu^j)$ that we obtain by restricting $\Omega$ to
 $\Omega^k$, and $\Xi$ to
 $\Xi^j$ for every $k,j$, and restricting the functions $g_n$
 to $\Xi^j$, and  $h_n$ to $\Omega^k$. Then, for all $k,j$
\[\sum \limits_{n=1}^\infty \| g_n\|_{L_{\widetilde{w}}^{Q}(\nu^j)} \|h_n\|_{L_{w^{-1}}^{P'}(\mu^k)}<\infty. \]
\noindent By Lemma \ref{l1} (a) it follows that 
 $\sum\limits_{j=1}^{\infty} g_j(x)h_j(y)$ converges absolutely for  a.e $(x,y)\in
\Xi^j\times \Omega^k $. Hence
 $\sum\limits_{j=1}^{\infty}
g_j(x)h_j(y)$ converges absolutely for almost every  $(x,y)\in \Xi\times \Omega $. This proves part (a).\\
\noindent From the part (a) the series
$\sum\limits_{j=1}^{\infty} g_j(x)h_j(y)f(y)$ converges absolutely
for a.e. $(x,y)\in \Xi\times\Omega$, the part (d) follows from the Lebesgue dominated
convergence theorem applied as in the ``only if" part of the proof
of Theorem \ref{ch1} (use of $\gamma_n$, and $\gamma$).
\end{proof}
We can now formulate a characterisation of $r$-nuclear operators on weighted mixed-norm spaces and a trace formula. 
\begin{thm}\label{ch2} 
Let $0<r\leq 1$. Let $({\Omega}_i,{\mathcal{M}}_i,\mu_i ) (i=1,\dots, l)$, 
$({\Xi}_j,{\mathcal{M}}'_j,{\nu}_j) (j=1,\dots, m)$ be measure spaces. Let $1\leq P,Q<\infty$. 
Let $w, \widetilde{w}$ be weights on $\Omega, \Xi$ respectively such that the triples 
$(\mu,w, P), (\nu,{\widetilde{w}}^{-1}, Q')$ are $\sigma$-finite . Then $T$ is
  $r$-nuclear operator from $L_w^{P}(\mu)$ into $L_{\widetilde{w}}^{Q}(\nu)$ if and only if there exist a sequence
 $(g_n)$ in $L_{\widetilde{w}}^{Q}(\nu)$, and a sequence $(h_n)$ in $L_{w^{-1}}^{P'}(\mu)$ such that $\sum \limits_{n=1}^\infty \|
 g_n\|_{L_{\widetilde{w}}^{Q}(\nu)}^r\|h_n\|_{L_{w^{-1}}^{P'}(\mu)}^r<\infty$, and such that for all $f\in L_w^{P}(\mu)$
\[Tf(x)=\int\limits_{\Omega}\left(\sum\limits_{n=1}^{\infty}
  g_n(x)h_n(y)\right)f(y)d\mu(y), \,\,\mbox{for a.e } x.\]
Moreover, if $w=\widetilde{w}$ satisfies \eqref{conw1}, $\mu=\nu$, $P=Q$ and $T$ is $r$-nuclear on $\mathcal{L}(L_w^{P}(\mu))$ with $r\leq \frac 23$, then  
\[\Tr(T)=\sum\limits_{j=1}^{\infty}\lambda_j,\]
where $\lambda_j\,\, (j=1,2,\dots)$ are the eigenvalues of $T$ with multiplicities taken into account,
and $\Tr(T)=\sum\limits_{j=1}^{\infty} \left< u_{j},v_{j} \right>.$
\end{thm}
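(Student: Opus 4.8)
The plan is to treat the two assertions of the statement separately. The characterisation of $r$-nuclearity is obtained by repeating the argument of Theorem \ref{ch1} almost verbatim, with Lemma \ref{l1ab} now playing the role that Lemma \ref{l1} played there. As before, it suffices to handle $r=1$, the range $0<r<1$ following from the inclusion of the $r$-nuclear operators among the $1$-nuclear ones. For the ``only if'' direction, an $r$-nuclear $T$ admits a representation $Tf=\sum_n\langle f,h_n\rangle g_n$ with $(g_n)\subset L_{\widetilde{w}}^{Q}(\nu)$, $(h_n)\subset L_{w^{-1}}^{P'}(\mu)$ and $\sum_n\|g_n\|_{L_{\widetilde{w}}^{Q}(\nu)}\|h_n\|_{L_{w^{-1}}^{P'}(\mu)}<\infty$; passing (via \cite{bp:mxlp}, Theorem 1(a)) to subsequences for which the relevant series converge pointwise a.e., Lemma \ref{l1ab}(d) permits the interchange of summation and integration and rewrites $Tf$ in the asserted kernel form. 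Conversely, given the kernel representation together with the summability condition, Lemma \ref{l1ab}(d) converts it back into $Tf=\sum_n\langle f,h_n\rangle g_n$, and the dominated-convergence argument from the proof of Theorem \ref{ch1} — using the majorant $\gamma(x)=\|f\|_{L_w^{P}(\mu)}\sum_n\|h_n\|_{L_{w^{-1}}^{P'}(\mu)}|g_n(x)|\in L_{\widetilde{w}}^{Q}(\nu)$ — shows that the partial sums converge in $L_{\widetilde{w}}^{Q}(\nu)$, so that $T$ is nuclear. Since Lemma \ref{l1ab} has already absorbed the $\sigma$-finite box decomposition, this part is essentially bookkeeping.

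For the trace formula the point is to invoke the general theory recalled in the introduction. First I would apply Theorem \ref{app12}: because $w=\widetilde{w}$ satisfies \eqref{conw1}, the space $L_w^{P}(\mu)$ has the metric approximation property, hence the approximation property. Consequently, by Grothendieck's result \cite{gro:me}, the number $\Tr(T)=\sum_j\langle u_j,v_j\rangle$ is independent of the chosen nuclear representation of $T$; in particular it may be computed from the representation supplied by the first part. Since $r\leq\frac{2}{3}$, the operator $T$ is $\frac{2}{3}$-nuclear on $L_w^{P}(\mu)$, and the approximation property together with Grothendieck's Lidskii-type theorem \eqref{lia1} then gives $\Tr(T)=\sum_j\lambda_j$, the $\lambda_j$ being the eigenvalues counted with multiplicity. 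Optionally, one may also record the concrete form of the trace: under $w=\widetilde{w}$, $\mu=\nu$, $P=Q$ one has $g_n\in L_w^{P}(\mu)$ and $h_n\in L_{w^{-1}}^{P'}(\mu)$, so H\"older's inequality gives $\int_\Omega|g_n(x)h_n(x)|\,d\mu(x)\leq\|g_n\|_{L_w^{P}(\mu)}\|h_n\|_{L_{w^{-1}}^{P'}(\mu)}$, whence $\sum_n\int_\Omega|g_nh_n|\,d\mu<\infty$ and, by Tonelli and Fubini, $\Tr(T)=\sum_n\int_\Omega g_n(x)h_n(x)\,d\mu(x)=\int_\Omega k(x,x)\,d\mu(x)$.

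I do not expect a genuine obstacle in either part: the first is a transcription of Theorem \ref{ch1} through Lemma \ref{l1ab}, and the second merely chains Theorem \ref{app12} with Grothendieck's theorem. The only places requiring real care are, on the one hand, verifying that the hypotheses of Theorem \ref{app12} are actually available in the present generality — that is, that $\sigma$-finiteness of the triple $(\mu,w,P)$ (together with \eqref{conw1}) suffices for the Step 3–Step 4 reductions in its proof — and, on the other hand, noting that the eigenvalue series $\sum_j\lambda_j$ is unconditionally convergent, so that the Lidskii identity is unambiguous; both are routine once spelled out.
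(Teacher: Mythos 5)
Your proposal is correct and follows essentially the same route as the paper: the characterisation is obtained by rerunning the proof of Theorem \ref{ch1} with Lemma \ref{l1ab}(d) substituted for Lemma \ref{l1}(d), and the trace formula by combining the metric approximation property from Theorem \ref{app12} (available since $w$ satisfies \eqref{conw1}) with Grothendieck's theorem \eqref{lia1}. The extra remark expressing $\Tr(T)$ as $\int_\Omega k(x,x)\,d\mu(x)$ is a harmless addition not present in the paper's argument.
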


%\begin{thm} Let $0<r\leq 1$. Let $(\Omega_i, S_i,\mu_i)$, for $i=1,\dots,n$, be given $\sigma$-finite measure spaces and $P=(p_1,\dots,p_n)$ a given $n$-tuple with $1\leq P<\infty$. An operator $T\in\mathcal{L}(L^{P})$ is $r$-nuclear if and only if   
%its Schwartz integral kernel $k(x,y)$ can be written in the form
%\[k(x,y)=\sum\limits_{j=1}^{\infty}u_j\otimes v_j,\]
%with $u_j\in \mathcal{L}(L_w^{P}),v_j\in\mathcal{L}(L_{{\widetilde{w}}^{-1}}^{P'}) $ and
%\[\sum\limits_{j=1}^{\infty}\|u_j\|_{\mathcal{L}(L^{P})}^r\|v_j\|_{\mathcal{L}(L^{P'})}^r<\infty.\]
%Moreover, if $T$ is $r$-nuclear with $r\leq \frac 23$, then  
%\[\Tr(T)=\sum\limits_{j=1}^{\infty}\lambda_j,\]
%where $\lambda_j\,\, (j=1,2,\dots)$ are the eigenvalues of $T$ with multiplicities taken into account,
%and $\Tr(T)=\sum\limits_{j=1}^{\infty} \left< u_{j},v_{j} \right>.$
%\end{thm}
\begin{proof} Again, for the proof of the characterisation it is enough to consider the case $r=1$. But that characterisation
 now follows from the same lines of the proof of Theorem \ref{ch1} by replacing the references to part (d) of Lemma \ref{l1} by the part (d) of Lemma \ref{l1ab}.  On the other hand, since $w$ additionally satisfies \eqref{conw1} the metric approximation property holds. If 
  $T$ is $r$-nuclear with $r\leq \frac 23$ on $L_w^P$, the trace 
  formula follows from the aforementioned 
 Grothendieck's theorem in the introduction.
\end{proof}

\section{$r$-nuclearity on modulation spaces and the harmonic oscillator}
\label{SEC:r-nuc-mod}

In this section we describe the $r$-nuclearity and a trace formula in modulation spaces.
We restrict our attention to modulation spaces but note that the same conclusions 
hold also in the Wiener amalgam spaces.
Thus, as an immediate consequence of Theorem \ref{ch2} and Corollary \ref{COR:mod}
we have:
\begin{cor} \label{COR:M-gen}
Let $0<r\leq 1$, $1\leq p,q<\infty$ and $w$ a submultiplicative polynomially
moderate weight. 
An operator $T\in\mathcal{L}(\M_w^{p,q},\M_w^{p,q})$ is $r$-nuclear if and only if 
its kernel $k(x,y)$ can be written in the form
\[k(x,y)=\sum\limits_{j=1}^{\infty}u_j\otimes v_j,\]
with $u_j\in\M_w^{p,q},v_j\in\M_{w^{-1}}^{p',q'} $ and
\[\sum\limits_{j=1}^{\infty}\|u_j\|_{\M_w^{p,q}}^r\|v_j\|_{\M_{w^{-1}}^{p',q'}}^r<\infty.\]
Moreover, if $T$ is $r$-nuclear with $r\leq \frac 23$, then  
\begin{equation}\label{EQ:mod-trace}
\Tr(T)=\sum\limits_{j=1}^{\infty}\lambda_j,
\end{equation}
where $\lambda_j\,\, (j=1,2,\dots)$ are the eigenvalues of $T$ with multiplicities taken into account.
\end{cor}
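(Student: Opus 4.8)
The plan is to deduce Corollary~\ref{COR:M-gen} from the already-established machinery in an essentially mechanical way, by transporting everything to the discrete mixed-norm sequence spaces via the Gabor frame isomorphism. First I would recall, as in the proof of Corollary~\ref{COR:mod}, that since $w$ is submultiplicative and polynomially moderate it satisfies \eqref{conw1} and \eqref{conw2}, and that \cite[Theorems 12.2.3, 12.2.4]{groch:bK} give the topological isomorphism $\M_w^{p,q}\cong\ell_{\widetilde w}^{p,q}(\zet^{2d})$ with $\widetilde w$ the restriction of $w$ to the lattice $\alpha\zet^d\times\beta\zet^d$; dually, $\M_{w^{-1}}^{p',q'}\cong\ell_{\widetilde w^{-1}}^{p',q'}(\zet^{2d})$, which identifies $(\ell_{\widetilde w}^{p,q})'$. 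Writing $\Phi:\M_w^{p,q}\to\ell_{\widetilde w}^{p,q}$ for this isomorphism (coefficient map against the Gabor frame, with $\Phi^{-1}$ the synthesis map), an operator $T$ on $\M_w^{p,q}$ is $r$-nuclear if and only if $\widetilde T:=\Phi T\Phi^{-1}$ is $r$-nuclear on $\ell_{\widetilde w}^{p,q}$, because $r$-nuclearity is preserved under composition with bounded operators (it is an ideal property) and $\Phi,\Phi^{-1}$ are bounded; moreover the quasi-norms are equivalent up to the operator norms of $\Phi,\Phi^{-1}$.

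Next I would apply Theorem~\ref{ch2} with the discrete data: $\Omega_1,\dots,\Omega_{2d}=\Xi_1,\dots,\Xi_{2d}=\zet$, all measures equal to the counting measure $\mu$, index $P=Q=(p,\dots,p,q,\dots,q)$, and weight $\widetilde w$. One must check the $\sigma$-finiteness hypothesis on the triples $(\mu,\widetilde w,P)$ and $(\mu,\widetilde w^{-1},P')$: taking the boxes $\Omega^k$ to be singletons $\{k\}\subset\zet^{2d}$, each has finite counting measure and $\|1_{\{k\}}\|_{\ell_{\widetilde w}^{p,q}}=\widetilde w(k)<\infty$ (and similarly $\widetilde w^{-1}(k)<\infty$ since $w$ is strictly positive), so both triples are $\sigma$-finite and Theorem~\ref{ch2} applies verbatim. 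This yields that $\widetilde T$ is $r$-nuclear iff its ``kernel'' is $\sum_j\tilde g_j(x)\tilde h_j(y)$ with $\tilde g_j\in\ell_{\widetilde w}^{p,q}$, $\tilde h_j\in\ell_{\widetilde w^{-1}}^{p',q'}$ and $\sum_j\|\tilde g_j\|^r\|\tilde h_j\|^r<\infty$. Pulling back through $\Phi^{-1}$, i.e. setting $u_j:=\Phi^{-1}\tilde g_j\in\M_w^{p,q}$ and defining $v_j\in\M_{w^{-1}}^{p',q'}$ as the element corresponding to $\tilde h_j$ under the duality, the nuclear representation $k(x,y)=\sum_j u_j\otimes v_j$ with $\sum_j\|u_j\|_{\M_w^{p,q}}^r\|v_j\|_{\M_{w^{-1}}^{p',q'}}^r<\infty$ follows; the equivalence of norms makes the two summability conditions equivalent.

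For the trace formula I would invoke the second half of Theorem~\ref{ch2}, or equivalently Grothendieck's theorem recalled in the introduction: the metric approximation property for $\M_w^{p,q}$ is Corollary~\ref{COR:mod}, so for $r\le\frac23$ an $r$-nuclear operator on $\M_w^{p,q}$ has a well-defined trace equal to $\sum_j\langle u_j,v_j\rangle$, and this equals $\sum_j\lambda_j$, the sum of eigenvalues with multiplicities. Because the trace and the eigenvalue sequence are spectral invariants, they are unchanged under the conjugation $T\mapsto\Phi T\Phi^{-1}$, so one may equally read off \eqref{EQ:mod-trace} from the $\ell_{\widetilde w}^{p,q}$ side via Theorem~\ref{ch2}. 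The main obstacle, and the only point requiring genuine care rather than bookkeeping, is the identification of the dual: one must be sure that under the Gabor isomorphism the dual pairing of $\M_w^{p,q}$ with $\M_{w^{-1}}^{p',q'}$ corresponds to the standard pairing of $\ell_{\widetilde w}^{p,q}$ with $\ell_{\widetilde w^{-1}}^{p',q'}=(\ell_{\widetilde w}^{p,q})'$, so that the kernel $\sum_j u_j\otimes v_j$ on the modulation-space side transports to $\sum_j\tilde g_j\otimes\tilde h_j$ on the sequence-space side with matching norms; this is exactly where the biorthogonality of the Gabor analysis and synthesis frames enters, and once it is in place the rest is immediate from Theorem~\ref{ch2} and Corollary~\ref{COR:mod}.
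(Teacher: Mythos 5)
Your proposal is correct and follows exactly the route the paper intends: the paper states this corollary as an ``immediate consequence of Theorem~\ref{ch2} and Corollary~\ref{COR:mod}'' without further proof, and your argument simply fills in that deduction (Gabor-frame transfer to $\ell_{\widetilde w}^{p,q}$, the $\sigma$-finiteness check via singleton boxes, the duality $(\M_w^{p,q})'=\M_{w^{-1}}^{p',q'}$, and Grothendieck's theorem via the metric approximation property for the trace formula). The only caveat, which the paper itself glosses over, is that the coefficient map identifies $\M_w^{p,q}$ with a complemented subspace of the sequence space rather than the whole space, but this does not affect the ideal-property and trace arguments.
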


In principle, in Corollary \ref{COR:M-gen},
the order $r\leq\frac23$ in the $r$-nuclearity is sharp for the validity of
the trace formula \eqref{EQ:mod-trace} in the context of general Banach spaces
(with approximation property). However, for the traces in, for example, 
the $L^{p}$-spaces the trace formula \eqref{EQ:mod-trace} may hold for
$r$-nuclear operators also with larger values of $r$. In fact, the condition $r\leq \frac23$
may be relaxed to the condition $r\leq r_{0}(p)$ with the index $r_{0}(p)\geq \frac23$ depending
on $p$, see Reinov and Laif \cite{Reinov}, as well as the authors' paper
\cite{dr13a:nuclp}. The same property may be expected also for general
operators in $L^P$ weighted mixed-norm spaces and
 consequently in modulation spaces.

However, for some special operators the trace formula \eqref{EQ:mod-trace} may be valid
for even larger values of $r$, for example even for simply nuclear operators (i.e. for
$r=1$).

\medskip
We shall now consider an application of such nuclearity concepts 
to the study of the harmonic oscillator 
$A=-\Delta +|x|^2$ on $\arda$. We will consider in particular the modulation space $\M_s^{p,q}$ corresponding to the weight $w(x,\xi)=(1+|\xi|)^s$. 
If
$$A\phi _j\equiv (-\Delta +|x|^2)\phi _j=\lambda_j\phi _j,$$ 
the eigenvalues $\lambda_{j}$ can be enumerated in the form
$\lambda=\lambda_{(k)}=\sum_{i=1}^{d} (2k_{i}+1),$
$k=(k_{1},\ldots, k_{d})\in\mathbb N^{d}$, see e.g.
\cite[Theorem 2.2.3]{Nicola-Rodino:bk}.
For the corresponding sequence of orthonormal eigenfunctions $\phi_{j}$
in $L^2(\arda)$, we can write with convergence in $L^2(\arda)$:
\[f=\sum\limits_{j=1}^{\infty}\langle f,\overline{\phi _j}\rangle\phi _j.\]
Hence, formally the kernel of $A$ can be written as
$$
k(x,y)=\sum\limits_{j=1}^{\infty}A\phi _j(x)\overline{\phi _j(y)}
=\sum\limits_{j=1}^{\infty}\lambda_j\phi _j(x)\overline{\phi _j(y)}.
$$
We note that this can be justified by taking negative powers of the harmonic oscillator 
$(-\Delta +|x|^2)^{-N}$ for $N>0$ large enough so that we start with the 
decomposition for the corresponding kernel in the form
\[
k_N(x,y)=\sum\limits_{j=1}^{\infty}\lambda_j^{-N}\phi _j(x)\overline{\phi _j(y)}.
\]
More generally for functions of the harmonic oscillator, defined by 
\begin{equation}\label{EQ:HA-F}
F(-\Delta +|x|^2)\phi _j= F(\lambda_j)\phi _j, \qquad
j=1,2,\ldots,
\end{equation}
we have:
\begin{thm} 
Let $0<r\leq 1$, $s\in\er$ and $1\leq p,q<\infty$. 
The operator $F(-\Delta +|x|^2)$ is $r$-nuclear on $\M_s^{p,q}(\arda)$ provided that
\begin{equation}\label{EQ:nucl}
\sum\limits_{j=1}^{\infty} |F(\lambda_j)|^r\|\phi _j
\|_{\M_s^{p,q}}^r\|\phi _j\|_{\M_{-s}^{p',q'}}^r<\infty.
\end{equation}
Moreover, if \eqref{EQ:nucl} holds with $r=1$,
%$r\leq\frac23$, 
we have the trace formula
\begin{equation}\label{EQ:F-trace}
\Tr F(-\Delta +|x|^2) =\sum_{j=1}^{\infty} F(\lambda_{j}),
\end{equation}
with the absolutely convergent series.
\end{thm}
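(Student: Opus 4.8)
The plan is to apply Corollary \ref{COR:M-gen} to the operator $T=F(-\Delta+|x|^2)$ acting on $\M_s^{p,q}(\arda)$, so the first task is to exhibit a nuclear-type decomposition of its kernel. From \eqref{EQ:HA-F} and the $L^2$-spectral expansion $f=\sum_j\langle f,\overline{\phi_j}\rangle\phi_j$, one formally has the kernel $k(x,y)=\sum_{j=1}^\infty F(\lambda_j)\phi_j(x)\overline{\phi_j(y)}$, i.e.\ the candidate decomposition $u_j=F(\lambda_j)\phi_j$ and $v_j=\overline{\phi_j}$. To make the hypotheses of Corollary \ref{COR:M-gen} applicable I would first check the duality pairing: $v_j=\overline{\phi_j}$ must be interpreted as an element of $(\M_s^{p,q})'=\M_{-s}^{p',q'}$, which is legitimate since the Hermite functions $\phi_j$ lie in $\mathcal{S}(\arda)$ and hence in every modulation space, in particular in $\M_{-s}^{p',q'}$. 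The summability condition \eqref{EQ:nucl} is then exactly $\sum_j\|u_j\|_{\M_s^{p,q}}^r\|v_j\|_{\M_{-s}^{p',q'}}^r=\sum_j|F(\lambda_j)|^r\|\phi_j\|_{\M_s^{p,q}}^r\|\phi_j\|_{\M_{-s}^{p',q'}}^r<\infty$, so Corollary \ref{COR:M-gen} gives that $T$ is $r$-nuclear on $\M_s^{p,q}(\arda)$.

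The delicate point, and the step I expect to be the main obstacle, is to justify that the operator whose kernel admits the above decomposition on $\M_s^{p,q}$ genuinely coincides with the spectrally-defined operator $F(-\Delta+|x|^2)$ of \eqref{EQ:HA-F}, rather than merely being some operator with a formally matching kernel. I would handle this exactly as the paragraph preceding the theorem suggests: pick $N>0$ large enough that the series $\sum_j\lambda_j^{-N}\|\phi_j\|_{\M_s^{p,q}}\|\phi_j\|_{\M_{-s}^{p',q'}}$ converges (such $N$ exists because $\lambda_{(k)}=\sum_i(2k_i+1)$ grows polynomially in $|k|$ while the modulation-space norms of Hermite functions grow at most polynomially), so that $(-\Delta+|x|^2)^{-N}$ is nuclear on $\M_s^{p,q}$ with the clean kernel $k_N(x,y)=\sum_j\lambda_j^{-N}\phi_j(x)\overline{\phi_j(y)}$, and the convergence of the expansion $f=\sum_j\langle f,\overline{\phi_j}\rangle\phi_j$ can then be upgraded from $L^2$ to $\M_s^{p,q}$ on a dense subspace, identifying the abstract operator with the concrete one. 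Writing $F(-\Delta+|x|^2)=F(\lambda)\cdot\lambda^{N}\cdot(-\Delta+|x|^2)^{-N}$ diagonally on the eigenbasis and invoking the ideal property together with the decomposition above yields $r$-nuclearity of $F(-\Delta+|x|^2)$ itself under \eqref{EQ:nucl}.

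For the trace formula \eqref{EQ:F-trace}, I would invoke the second assertion of Corollary \ref{COR:M-gen}: since $\M_s^{p,q}$ has the metric (hence bounded) approximation property by Corollary \ref{COR:mod}, and since \eqref{EQ:nucl} holds with $r=1$ makes $T=F(-\Delta+|x|^2)$ nuclear, the eigenvalues of $T$ are precisely $F(\lambda_j)$ with the correct multiplicities (the $\phi_j$ being eigenfunctions spanning a dense subspace), so that $\Tr(T)=\sum_j F(\lambda_j)$ provided the Grothendieck--Lidskii trace identity applies. Here one must be slightly careful that Corollary \ref{COR:M-gen} literally states the Lidskii identity only for $r\le\frac23$; however, as remarked after that corollary, for operators of this special form — diagonalizable in an explicit orthonormal-in-$L^2$ basis of Schwartz functions — the trace formula is valid for all nuclear operators, because one can compute $\Tr(T)=\sum_j\langle u_j,v_j\rangle=\sum_j F(\lambda_j)\langle\phi_j,\overline{\phi_j}\rangle=\sum_j F(\lambda_j)$ directly from the definition \eqref{EQ:Trace}, the series being absolutely convergent by \eqref{EQ:nucl} with $r=1$ together with $\|\phi_j\|_{L^2}=1\le C\|\phi_j\|_{\M_s^{p,q}}\|\phi_j\|_{\M_{-s}^{p',q'}}$, and this value is independent of the representation precisely because of the approximation property. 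This last identification of the abstract trace with $\sum_j F(\lambda_j)$ is the point requiring the most care, but it is routine once the operator has been correctly identified in Step~1.
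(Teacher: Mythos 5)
Your proposal is correct and follows essentially the same route as the paper: the kernel decomposition $u_j=F(\lambda_j)\phi_j$, $v_j=\overline{\phi_j}$ fed into Corollary \ref{COR:M-gen} for the $r$-nuclearity, and then the trace computed directly from the definition \eqref{EQ:Trace} (rather than via Grothendieck--Lidskii), using $\langle\phi_j,\overline{\phi_j}\rangle_{\M_s^{p,q},\M_{-s}^{p',q'}}=(\phi_j,\phi_j)_{L^2}=1$ and the duality bound $1\le\|\phi_j\|_{\M_s^{p,q}}\|\phi_j\|_{\M_{-s}^{p',q'}}$ for absolute convergence. The extra care you take in identifying the kernel-defined operator with the spectrally defined one is a point the paper only sketches in the preceding paragraph, but it does not change the argument.
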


\begin{proof}
The first part follows from Corollary \ref{COR:M-gen}.
Moreover, while formula \eqref{EQ:F-trace} can be expected 
from the general Grothendieck's
theory (at least for $r\leq\frac23$), 
in this case it follows in an elementary way due to the smoothness property
of Hermite functions. Indeed, from \eqref{EQ:HA-F},
the integral kernel of $F(-\Delta +|x|^2)$ is given by 
\begin{equation}\label{EQ:F-kernel}
k(x,y)=\sum\limits_{j=1}^{\infty} F(\lambda_j)\phi _j(x)\overline{\phi _j(y)}.
\end{equation}
At the same time, we know that functions $\phi_{j}$ are all smooth and fast decaying, implying in particular
that $\phi _j\in {\M_s^{p,q}}$ (actually,
this also follows if the assumption \eqref{EQ:nucl} holds with $r=1$).
Consequently, by \eqref{EQ:Trace}, we obtain
$$
 \Tr F(-\Delta +|x|^2) =\sum_{j=1}^{\infty} F(\lambda_{j}) 
 \langle\phi_{j},\overline{\phi_{j}}\rangle_{\M_s^{p,q},\M_{-s}^{p',q'}}
 =\sum_{j=1}^{\infty} F(\lambda_{j}),
$$
in view of the equality
$$
 \langle\phi_{j},\overline{\phi_{j}}\rangle_{\M_s^{p,q},\M_{-s}^{p',q'}}=
 (\phi_{j},{\phi_{j}})_{L^{2}}=1.
$$
The series in \eqref{EQ:F-trace} converges absolutely in view of
\begin{multline*}
\sum_{j=1}^{\infty} |F(\lambda_{j})|=
\sum_{j=1}^{\infty} |F(\lambda_{j})|
 (\phi_{j},{\phi_{j}})_{L^{2}}
 =
 \sum_{j=1}^{\infty} |F(\lambda_{j})|
 \langle\phi_{j},\overline{\phi_{j}}\rangle_{\M_s^{p,q},\M_{-s}^{p',q'}} \\
 \leq
 \sum\limits_{j=1}^{\infty} |F(\lambda_j)|\|\phi _j
\|_{\M_s^{p,q}}\|\phi _j\|_{\M_{-s}^{p',q'}}<\infty,
\end{multline*}
which is finite by the assumption.
This completes the proof. 
\end{proof}

%\bibliographystyle{alphaabbr}
%\bibliography{bib-Delgado-15-12-29}
%\end{document}

\end{document}